\newfont{\bb}{msbm10 at 11pt}
\newfont{\bbsmall}{msbm8 at 8pt}
\newcommand{\h}{\widehat}
\newcommand{\R}{\mbox{\bb R}}
\newcommand{\N}{\mbox{\bb N}}
\newcommand{\Z}{\mbox{\bb Z}}
\newcommand{\M}{\mbox{\bb M}}
\newcommand{\B}{\mbox{\bb B}}
\newcommand{\esf}{\mbox{\bb S}}
\newcommand{\Te}{\mbox{\bb T}}
\newcommand{\Nsmall}{\mbox{\bbsmall N}}
\newcommand{\Rsmall}{\mbox{\bbsmall R}}
\newcommand{\rth}{\R^3}
\newcommand{\Mint}{M_{\infty}}
\newcommand{\TM}{{\cal T}(M)}
\newcommand{\TS}{{\cal T}(\S)}
\def\De{{\Delta}}
\def\S{{\Sigma}}
\def\s{{\sigma}}
\def\a{{\alpha}}
\def\g{{\gamma}}
\def\G{{\Gamma}}
\def\de{{\delta}}
\def\ve{{\varepsilon}}
\def\centerbmp#1#2#3{\vskip#2\relax\centerline{\hbox to#1{\special
    {bmp:#3 x=#1, y=#2}\hfil}}}
\newtheorem{theorem}{Theorem}[section]
\newtheorem{lemma}[theorem]{Lemma}
\newtheorem{remark}[theorem]{Remark}
\newtheorem{corollary}[theorem]{Corollary}
\newtheorem{definition}[theorem]{Definition}
\newtheorem{conjecture}[theorem]{Conjecture}
\newtheorem{assertion}[theorem]{Assertion}
\newenvironment{proof}{\smallskip\noindent{\it Proof.}\hskip \labelsep}
                          {\hfill\penalty10000\raisebox{-.09em}{$\Box$}\par\medskip}
\begin{document}
\begin{title}
{The Dynamics Theorem  for $CMC$ surfaces in $R^3$}
\end{title}
\begin{author}
{William H. Meeks, III\thanks{ This material
is based upon work for the NSF under Award No. DMS - 0703213. Any
opinions, findings, and conclusions or recommendations expressed in
this publication are those of the authors and do not necessarily
reflect the views of the NSF.} \and Giuseppe Tinaglia}
\end{author}
\maketitle
\begin{abstract}

In this paper, we study the space of translational limits ${\cal
T}(M)$ of a surface $M$ properly embedded in $\rth$ with nonzero
constant mean curvature and bounded second fundamental form. There
is a natural map ${\cal T}$ which assigns to any surface $\Sigma \in
{\cal T}(M)$, the set ${\cal T}(\Sigma)\subset {\cal T}(M)$. Among
various dynamics type results we prove that surfaces in minimal
${\cal T}$-invariant sets of ${\cal T}(M)$ are chord-arc. We also
show that if $M$ has an infinite number of ends, then there exists a
nonempty minimal ${\cal T}$-invariant set in ${\cal T}(M)$
consisting entirely of surfaces with planes of Alexandrov symmetry.
Finally, when $M$ has a plane of Alexandrov symmetry, we prove the
following characterization theorem: $M$ has finite topology if and
only if $M$ has a finite number of ends greater than one.

%In this paper, we develop some new tools and theory that are useful
%in describing the geometry of properly embedded, constant mean
%curvature surfaces in $\rth$ with bounded second fundamental form.
%More precisely, we prove dynamics type results for the space of
%translational limits of such a surface. As a consequence of our main
%theorems, in subsequent papers we obtain rigidity results for
%certain properly embedded, constant mean curvature surfaces in
%$\rth$~\cite{mt2}, as well as derive curvature estimates for
%complete, embedded, constant mean curvature surfaces in complete
%locally homogeneous three-manifolds.

\vspace{.1cm} \noindent{\it Mathematics Subject Classification:}
Primary 53A10, Secondary 49Q05, 53C42

%\vspace{.1cm}
\noindent{\it Key words and phrases:} Minimal surface, constant mean
curvature, homogeneous space, Delaunay surface, minimal invariant
set, chord-arc.
\end{abstract}

\section{Introduction.}

A general problem in classical surface theory is to describe the
asymptotic geometric structure of a connected, noncompact, properly
embedded, nonzero constant mean curvature ($CMC$) surface $M$ in
$\rth$. In this paper, we will show that when $M$ has bounded second
fundamental form, for any divergent sequence of points $p_n\in M$, a
subsequence of the translated surfaces $M-p_n$ converges to a
properly immersed surface of the same constant mean curvature
which bounds a smooth open subdomain on its mean convex side. The
collection $\Te(M)$ of all these limit surfaces sheds light on the
geometry of $M$ at infinity.

We will focus our attention on the subset ${\cal T}(M)\subset
\Te(M)$ consisting of the connected components of surfaces in
$\Te(M)$ which pass through the origin in $\rth$. Given a surface
$\S\in {\cal T}(M)$,  we will prove that ${\cal T}(\S)$ is always a
subset of ${\cal T}(M)$. In particular, we can consider ${\cal T}$ to represent a function:
$${\cal T}\colon {\cal T}(M) \to {\cal P}({\cal T}(M)),$$
where ${\cal P}({\cal T}(M))$ denotes the power set of ${\cal
T}(M)$. Using the fact that ${\cal T}(M)$ has a natural compact
metric space topology, we obtain classical dynamics type results on
${\cal T}(M)$ with respect to the mapping ${\cal T}$. These
dynamics results include the existence of nonempty minimal ${\cal
T}$-invariant sets in $\TM$ and are described in Theorem~\ref{T},
which we refer to as the $CMC$ Dynamics Theorem in $\rth$, or more
simply as just the Dynamics Theorem.

Assume $M\subset \rth$ is a connected, noncompact, properly embedded
$CMC$ surface with bounded second fundamental form. In section 3, we
demonstrate various properties of the minimal ${\cal T}$-invariant
sets in $\TM$. For example, we prove:
\begin{quote}
{\it Surfaces in minimal ${\cal T}$-invariant sets in $\TM$ are
chord-arc.}
\end{quote}
\begin{quote} {\it If $M$ has an infinite number of ends, then $\TM$
contains a minimal ${\cal T}$-invariant set in which every element
has a plane of Alexandrov symmetry.}
\end{quote}
\begin{quote}{\it If $M$ has finite genus, then any element
in a minimal ${\cal T}$-invariant set is a Delaunay
surface\footnote{In this manuscript, ``Delaunay surfaces'' refers to
the embedded $CMC$ surfaces of revolution discovered by
Delaunay~\cite{de1} in 1841.}.}\end{quote} In the special case that
$M$ has finite topology\footnote{A surface has {\it finite topology}
if it is homeomorphic to a closed surface minus a finite number of
points.}, this last result follows from the main theorem
in~\cite{kks1}, however the full generality of this result is needed
in applications in~\cite{mt1,mt2}.

In section~\ref{sc4}, we deal with  $CMC$ surfaces with a plane of
Alexandrov symmetry. In particular we obtain the following
characterization result:

\begin{quote} {\it If $M$ is a complete, connected, noncompact embedded $CMC$ surface with a plane of Alexandrov symmetry and bounded second
fundamental form, then $M$ has finite topology if and only if it has
a finite number of ends greater than one.}\end{quote}

The collection of properly embedded $CMC$ surfaces with bounded
second fundamental form is quite large and varied (see~\cite{gb1,
kap1, ka5, la3, map, mpp1}). Many of these examples
appear as doubly and singly-periodic surfaces. The
techniques of Kapouleas~\cite{kap1} and Mazzeo-Pacard~\cite{map} can
be applied to obtain many nonperiodic examples of finite and
infinite topology. Some theoretical aspects of the study of these
special surfaces have been developed previously in works of Meeks
~\cite{me17}, Korevaar-Kusner-Solomon~\cite{kks1} and
Korevaar-Kusner~\cite{kk2}; results from all of these three key
papers are applied here. More generally, the broader theory of
properly embedded $CMC$ surfaces in homogeneous three-manifolds is
an active field of research with many interesting recent results
~\cite{dh1, fm1, hars1}. In~\cite{mt5}, we will generalize the ideas
contained in this paper to obtain related theoretical results for
properly embedded separating $CMC$ hypersurfaces of bounded second
fundamental form in homogeneous $n$-manifolds.

In subsequent papers,~\cite{mt1,mt2}, we apply the results contained
in this manuscript. In~\cite{mt2}, we prove that the existence of a
Delaunay surface in $\TM$ implies $M$ does not admit any other
noncongruent isometric immersion into $\rth$ with the same constant
mean curvature (see also~\cite{ku2,smyt1}). In~\cite{mt1}, we show
that {\it any complete, embedded, noncompact, simply-connected $CMC$
surface $M$ in a fixed homogeneous three-manifold $N$ has the
appearance of a suitably scaled helicoid nearby any point of $M$
where the second fundamental form is sufficiently large}
(see~\cite{tin1} for a related result).
%This geometric result plays a key role in proving
%that any such $M$ has bounded second fundamental form, where the
%bound depends only on a positive lower bound of the mean curvature
%of $M$ and on an upper bound of the absolute sectional curvature of
%$N$ (see~\cite{mt1} for details and~\cite{tin1} for a related
%result).

\vspace{.2cm} \noindent{Acknowledgements:} We thank Rob Kusner and
Joaquin Perez for their helpful comments on the results and proofs
contained in this paper.  We also thank Joaquin Perez for making the
figures that appear here.

\section{The Dynamics Theorem for $CMC$ surfaces of bounded
curvature.} \label{improved}

In this section, motivated by  previous work of Meeks, Perez and Ros
in~\cite{mpr10}, we prove a dynamics type result for the space
${\cal T}(M)$ of certain translational limit surfaces of a properly
embedded, $CMC$ surface $M\subset \rth$ with bounded second
fundamental form. All of these limit surfaces satisfy the
almost-embedded property described in the next definition.

\begin{definition} \label{def} {\rm Suppose $W$ is a complete flat three-manifold
with boundary $\partial W=\S$ together with an isometric immersion
$f\colon W \to \rth$ such that $f$ restricted to the interior of $W$
is injective. This being the case, if $f(\S)$ is a $CMC$ surface and
$f(W)$ lies on the mean convex side of $f(\S)$, we call the image
surface $f(\S)$ a {\em strongly Alexandrov embedded $CMC$ surface}.}
\end{definition}

We note that, by elementary separation properties, any properly
embedded $CMC$ surface  in $\rth$ is always strongly Alexandrov
embedded. Furthermore, by item~{\it 1} of Theorem~\ref{T} below, any
strongly Alexandrov embedded $CMC$ surface in $\rth$ with bounded
second fundamental form is properly immersed in $\rth$.

Recall that the only compact Alexandrov embedded\footnote{A compact
surface $\S$ immersed in $\rth$ is {\it Alexandrov embedded} if $\S$
is the boundary of a compact three-manifold immersed in $\rth$.}
$CMC$ surfaces in $\rth$ are spheres by the classical result of
Alexandrov~\cite{aa1}. Hence, from this point on, we will only
consider surfaces $M$ which are noncompact and connected.

\begin{definition} {\rm Suppose $M\subset \rth$ is a connected,
noncompact, strongly Alexandrov embedded $CMC$ surface with bounded
second fundamental form.
\begin{enumerate}
\item ${\cal T}(M)$ is the set of all connected, strongly
Alexandrov embedded $CMC$ surfaces $\S \subset \rth$, which are
obtained in the following way.

There exists a sequence of points $p_n\in M$, $\lim_{n\to
\infty}|p_n|=\infty$, such that the translated surfaces $M-p_n$
converge $C^2$ on compact sets of $\rth$ to a strongly Alexandrov
embedded $CMC$ surface $\Sigma'$, and $\Sigma$ is a connected
component of $\Sigma'$ passing through the origin. Actually we
consider the immersed surfaces in ${\cal T}(M)$ to be {\it pointed}
in the sense that if such a surface is not embedded at the origin,
then we consider the surface to represent two different elements in
${\cal T}(M)$ depending on a choice of one of the two preimages of
the origin.
\item $\Delta \subset  {\cal T}(M)$ is called
{\em ${\cal T}$-invariant}, if $\S\in\Delta$
implies ${\cal T}(\S)\subset \Delta$.
\item A nonempty subset $\Delta\subset {\cal T}(M)$ is
called a {\em minimal} ${\cal T}$-invariant
set, if it is ${\cal T}$-invariant and contains no smaller nonempty ${\cal
T}$-invariant sets.
\item If $\S \in {\cal T}(M)$ and $\S$ lies in a minimal ${\cal T}$-invariant
 set of ${\cal T}(M)$, then $\S$ is called a  {\em minimal
element} of ${\cal T}(M)$.
\end{enumerate}}
\end{definition}

Throughout the remainder of this paper, $\B(p,R)$ denotes the open
ball in $\rth$ of radius $R$ centered at the point $p$ and  $\B(R)$
denotes the open ball of radius $R$ centered at the origin in
$\rth$. Furthermore, we will always orient surfaces so that
their mean curvature $H$ is positive.

With these definitions in hand, we now state our Dynamics Theorem.

\begin{theorem}[$CMC$ Dynamics Theorem]\label{T}
Let $M\subset \rth$ be a connected, noncompact, strongly Alexandrov
embedded $CMC$ surface with bounded second fundamental form. Let $W$
be the associated complete flat three-manifold on the mean convex
side of $M$. Then the following statements hold:
\begin{enumerate}
\item \label{n1} $M$ is properly immersed in $\rth$.
\item \label{n2} There exist positive constants $c_1,c_2$ depending
only on the mean curvature of $M$ and on an upper bound for the norm
of its second fundamental form, such that for any $p\in M$ and
$R\geq 1$, \begin{equation}\label{eq4}c_1\leq \frac{{\rm
Area}(M\cap\B(p,R))}{{\rm Volume}(W\cap \B(p,R))}\leq
c_2.\end{equation} In particular, for $R\geq 1$, $\mbox{\rm
Area}(M\cap\B(R))\leq \frac{4\pi c_2}{3} R^3$. Furthermore, $M$ has
a regular neighborhood of radius $\ve$ in $W$, where $\ve>0$ only
depends on the mean curvature of $M$ and on an upper bound for the
norm of its second fundamental form.
\item \label{n3} $W$ is a handlebody\footnote{A {\it handlebody} is a three-manifold with
boundary which is homeomorphic to a closed regular neighborhood of
some  connected, properly embedded simplicial one-complex in
$\rth$.} and every point in $W$ is a distance of less than $\frac1H$
from $\partial W$, where $H$ is the mean curvature of $M$.
\item \label{n4} ${\cal T}(M)$ is nonempty and ${\cal T}$-invariant.
\item \label{n5} ${\cal T}(M)$ has a natural compact topological space structure
given by a metric $d_{{\cal T}(M)}$. The metric $d_{\TM}$ is induced
by the Hausdorff distance between compact subsets of $\rth$.
\item \label{n6} If $M$ is an element of ${\cal T}(M)$, then ${\cal T}(M)$ is a
connected space. In particular, if $M$ is invariant under a
translation, then ${\cal T}(M)$ is connected.
\item \label{n7} A nonempty  set $\Delta \subset {\cal T}(M)$ is a
minimal ${\cal T}$-invariant set if and
only if whenever $\S \in \Delta$, then ${\cal T}(\S)=\Delta$.
\item \label{n8} Every nonempty ${\cal T}$-invariant
set of ${\cal T}(M)$ contains a nonempty minimal ${\cal
T}$-invariant set. In particular, since ${\cal T}(M)$ is itself a
nonempty ${\cal T}$-invariant set, ${\cal T}(M)$ always contains
nonempty minimal invariant sets.
\item \label{n9} Any minimal ${\cal T}$-invariant set in ${\cal T}(M)$ is a compact
connected subspace of ${\cal T}(M)$.

\end{enumerate}
\end{theorem}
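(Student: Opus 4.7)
The plan is to handle compactness and connectedness separately, both times reducing to a single surface $\Sigma \in \Delta$ via item \ref{n7}, which identifies $\Delta$ with ${\cal T}(\Sigma)$. Compactness is the easier part: since ${\cal T}(M)$ is already a compact metric space by item \ref{n5}, I only need $\Delta$ closed, and this follows from a standard diagonal extraction. Given a convergent sequence $\Sigma_k \to \Sigma_\infty$ with each $\Sigma_k \in {\cal T}(\Sigma)$, for each $k$ I would pick $p_k \in \Sigma$ with $|p_k| > k$ and $d_{{\cal T}(M)}(\Sigma - p_k, \Sigma_k) < 1/k$; then $\Sigma - p_k \to \Sigma_\infty$ with $|p_k| \to \infty$, so $\Sigma_\infty \in {\cal T}(\Sigma) = \Delta$.

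For connectedness, the natural candidate is the ``orbit''
\[
X_\Sigma = \{\Sigma - p : p \in \Sigma\} \subset {\cal T}(M),
\]
each $\Sigma - p$ pointed at the origin (which lies in $\Sigma - p$ since $p \in \Sigma$). The map $p \mapsto \Sigma - p$ from $\Sigma$ into ${\cal T}(M)$ is continuous because translations are continuous in the Hausdorff metric on compact subsets of $\rth$ underlying the topology of ${\cal T}(M)$. Consequently $X_\Sigma$ is the continuous image of a connected surface and is itself connected. The finish will then be quick: the definition of ${\cal T}(\Sigma)$ gives $\Delta = {\cal T}(\Sigma) \subset \overline{X_\Sigma}$, while the (still to be established) inclusion $X_\Sigma \subset \Delta$ combined with the closedness of $\Delta$ yields $\overline{X_\Sigma} \subset \Delta$. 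Together these give $\Delta = \overline{X_\Sigma}$, the closure of a connected set, hence connected.

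The main obstacle is therefore the orbit inclusion $X_\Sigma \subset \Delta$. The key idea is to exploit the recurrence of $\Sigma$: because $\Sigma \in \Delta = {\cal T}(\Sigma)$, there exist points $q_n \in \Sigma$ with $|q_n| \to \infty$ and $\Sigma - q_n \to \Sigma$ in ${\cal T}(M)$. Given $p \in \Sigma$, the Hausdorff convergence of $(\Sigma - q_n) \cap \overline{\B(|p| + 1)}$ to $\Sigma \cap \overline{\B(|p| + 1)}$ supplies points $s_n \in \Sigma - q_n$ with $s_n \to p$; by connectedness of the translates these $s_n$ lie on the sheet passing through the origin. Setting $r_n = s_n + q_n \in \Sigma$, one has $|r_n| \to \infty$ and, by the joint continuity of translation in ${\cal T}(M)$,
\[
\Sigma - r_n = (\Sigma - q_n) - s_n \longrightarrow \Sigma - p,
\]
realizing $\Sigma - p$ as an element of ${\cal T}(\Sigma) = \Delta$. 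The only bookkeeping to watch is the pointed-surface convention at points where $\Sigma$ fails to be embedded, which is handled by consistently tracking the correct preimage of the origin throughout the approximation.
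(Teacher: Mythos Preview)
Your argument is correct and follows the same route as the paper: identify $\Delta$ with ${\cal T}(\Sigma)$ via item~\ref{n7}, show ${\cal T}(\Sigma)$ is closed (hence compact in the compact space ${\cal T}(M)$), and obtain connectedness as the closure of the path-connected orbit $\{\Sigma-p:p\in\Sigma\}$. The only difference is packaging: the paper proves item~\ref{n9} in two lines by invoking item~\ref{n6} (applied with $\Sigma$ in place of $M$, using $\Sigma\in{\cal T}(\Sigma)$) and the closedness of ${\cal T}(\Sigma)$ already noted in the proof of item~\ref{n8}, whereas you have essentially inlined the proof of item~\ref{n6}, including the recurrence argument for the orbit inclusion $X_\Sigma\subset{\cal T}(\Sigma)$.
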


\begin{proof}
For the proofs of items~{\it \ref{n1}} and {\it \ref{n2}} see
Corollary~5.2 in~\cite{mt3} or see~\cite{mr7}. The key idea in the
proof of Corollary~5.2 is to show that the immersed surface $M$ has
a fixed size regular neighborhood on its mean convex side.

We now prove item~{\it \ref{n3}}. The proof that $W$ is a handlebody
is based on topological techniques used previously to study the
topology of a complete, orientable flat three-manifold $X$ with
minimal surfaces as boundary. These techniques were first developed
by Frohman and Meeks~\cite{fme1} and later generalized by
Freedman~\cite{fre1}. An important consequence of the results and
theory developed in these papers is that if $\partial X$ is mean
convex, $X$ is not a handlebody, and $X$ is not a Riemannian product
of a flat surface with an interval, then $X$ contains an orientable,
noncompact, embedded, stable minimal surface $\S$ with compact
boundary. Suppose now that $M\subset \rth$ is a strongly Alexandrov
embedded $CMC$ surface with associated domain $W$ on its mean convex
side. Since $M$ is not totally geodesic, $W$ cannot be a Riemannian
product of a flat surface with an interval. Therefore, if $W$ is not
a handlebody, there exists an orientable, noncompact, embedded
stable minimal surface $\S\subset W$ with compact boundary. Since
$\S$ is orientable and stable, a result of Fisher-Colbrie~\cite{fi1}
implies $\S$ has finite total curvature. It is well known that such
a $\S$ has an end $E$ which is asymptotic to an end of a catenoid or
a plane ~\cite{sc1}. We will obtain a contradiction when $E$ is a
catenoidal type end; the case where $E$ is a planar type end can be
treated in the same manner. After a rotation of $M$, assume that the
catenoid to which $E$ is asymptotic is vertical  and $E$ is a graph
over the complement of a disk in the $(x_1,x_2)$-plane; assume the
disk is $\B(R)\cap \{x_3=0\}$ for some large $R$. Let $S^2$ be a
sphere in $\rth$ with mean curvature equal to the mean curvature of
$M$, which lies below $E$ and which is disjoint from the solid
cylinder $\{(x_1,x_2,x_3)\mid x_1^2+x_2^2\leq R^2\}$. By vertically
translating $S^2$ upward across the $(x_1,x_2)$-plane and applying
the maximum principle for $CMC$ surfaces, we find that as $S^2$
translates across $E$, the portions of the translated sphere that
lie above $E$ do not intersect $M=\partial W$. Thus, some vertical
translate $\h{S}^2$ of $S^2$ lies inside $W$. Next translate
$\h{S}^2$ inside $W$ so that it touches $\partial W$ a first time.
The usual application of the maximum principle for $CMC$ surfaces
implies that $M$ is a sphere, which is not possible since $M$ is not
compact.

Note that if some point $p\in W$ had distance at least $\frac1H$
from $\partial W$, then $\partial \B (p,\frac1H)$ is a sphere  of
mean curvature $H$ in $W$. The arguments in the previous paragraph
show that no such sphere can exist, and this contradiction completes
the proof of item~{\it \ref{n3}}.

The uniform local area estimates for $M$ given in item~{\it
\ref{n2}} and the assumed bound on the second fundamental form of
$M$, together with standard compactness arguments, imply that for
any divergent sequence of points $\{p_n\}_n$ in $M$, a subsequence
of the translated surfaces $M-p_n$ converges on compact sets  of
$\rth$ to a strongly Alexandrov embedded $CMC$ surface $\M_{\infty}$
in $\rth$. The component $M_{\infty}$ of $\M_{\infty}$ passing
through the origin is a surface in ${\cal T}(M)$ (if $M_{\infty}$ is
not embedded at the origin, then one obtains two elements in ${\cal
T}(M)$ depending on a choice of one of the two pointed components).
Hence, $\TM$ is nonempty.

Let $\S\in \TM$ and $\S'\in {\cal T}(\S)$. By definition of $\TS$,
any compact domain of $\S'$ can be approximated arbitrarily well by
translations of compact domains ``at infinity'' in $\S$. In turn, by
definition of $\TM$, these compact domains ``at infinity'' in $\S$
can be approximated arbitrarily well by translated compact domains
``at infinity'' on $M$. Hence, a standard diagonal argument implies
that $\S'\in\TM$. Thus, $\TM$ is ${\cal T}$-invariant, which proves
 item~{\it \ref{n4}}.

Suppose now that $\S \in {\cal T}(M)$ is embedded at the origin. In this
case, there exists an $\ve>0$ depending only on the bound of the
second fundamental form of $M$, so that there exists a disk
$D(\S)\subset \S\cap \overline{\B}(\ve)$ with $\partial
D(\S)\subset\partial\overline{\B}(\ve)$, $\vec{0}=(0,0,0) \in D(\S)$
and such that $D(\S)$ is a graph with gradient at most 1 over its
projection to the tangent plane $T_{\vec{0}}D(\S)\subset \rth$.
Given another such $\S'\in {\cal T}(M)$, define
$$ d_{{\cal T}(M)}(\S,\S')=d_{\cal H}(D(\S),D(\S')),$$
where $d_{\cal H}$ is the Hausdorff distance. If $\vec{0}$ is not a
point where $\S$ is embedded, then since we consider $\S$ to
represent one of two different pointed surfaces in ${\cal T}(M)$, we
choose $D(\S)$ to be the disk in $\S\cap\B(\ve)$ containing the
chosen base point. With this modification, the above metric is
well-defined on ${\cal T}(M)$.

Using the fact that the surfaces in ${\cal T}(M)$ have uniform local
area and curvature estimates (see item~{\it \ref{n2}}), we will now
prove ${\cal T}(M)$ is sequentially compact and hence compact. Let
$\{\S_n\}_n$ be a sequence of surfaces in ${\cal T}(M)$ and let
$\{D(\S_n)\}_n$ be the related sequence of graphical disks defined
in the previous paragraph. A standard compactness argument implies
that a subsequence, $\{D(\S_{n_i})\}_{n_i}$ of these disks converges
to a graphical $CMC$ disk $D_\infty$. Using item~{\it \ref{n2}}, it
is straightforward to show that $D_\infty$ lies on a complete,
strongly Alexandrov embedded surface $\S_\infty$ with the same
constant mean curvature as $M$. Furthermore, $\S_\infty$ is a limit
of compact domains $\Delta_{n_i}\subset \S_{n_i}$. In turn, the
$\Delta_{n_i}$'s are limits of translations of compact domains in
$M$, where the translations diverge to infinity. Hence, $\S_\infty$
is in ${\cal T}(M)$ and by definition of $d_{{\cal T}(M)}$, a
subsequence of $\{\S_n\}_n$ converges to $\S_\infty$. Thus, ${\cal
T}(M)$ is a compact metric space with respect to the metric
$d_{{\cal T}(M)}$. We remark that this compactness argument can be
easily modified to prove that the topology of  ${\cal T}(M)$ is
independent of the sufficiently small radius $\ve$ used to define
$d_{{\cal T}(M)}$. It follows that the topological structure on
${\cal T}(M)$ is determined  ($\ve$ chosen sufficiently small), and
it is in this sense that the topological structure is natural. This
completes the proof of item~{\it \ref{n5}}.

Suppose now that $M\in{\cal T}(M)$. Note that whenever $X\in {\cal
T}(M)$, then the path connected set of translates ${\rm
Trans}(X)=\{X-q\mid q\in X\}$ is a subset of ${\cal T}(M)$. In
particular,  ${\rm Trans}(M)$ is a subset of ${\cal T}(M)$. We claim
that the closure of ${\rm Trans}(M)$ in ${\cal T}(M)$ is equal to
${\cal T}(M)$. By definition of closure, the closure of ${\rm
Trans}(M)$  is a subset of $ {\cal T}(M)$. Using the definition of
${\cal T}(M)$ and the metric space structure on ${\cal T}(M)$, it is
straightforward to check that ${\cal T}(M)$ is contained in the
closure of ${\rm Trans}(M)$; hence, $\overline{\rm Trans(M)}=\TM$.
Since the closure of a path connected set in a topological space is
always connected, we conclude that ${\cal T}(M)$ is connected, which
completes the proof of item~{\it \ref{n6}}.

We now prove item~{\it \ref{n7}}. Suppose $\Delta$ is a nonempty,
minimal ${\cal T}$-invariant set and $\S \in \De$. By definition of
${\cal T}$-invariance, $\TS\subset\De$. By item~{\it \ref{n4}},
$\TS$ is a nonempty ${\cal T}$-invariant set. By definition of
minimal ${\cal T}$-invariant set, $\TS=\De$, which proves one of the
desired implications. Suppose now that $\De\subset \TS$ is nonempty
and that whenever $\S\in \De$, $\TS=\De$; it follows that $\De$ is a
${\cal T}$-invariant set. If $\De'\subset \De$ is a nonempty ${\cal
T}$-invariant set, then there exists a $\S'\in \De'$, and thus,
$\De={\cal T}(\S')\subset\De'\subset \De$. Hence, $\De'=\De$, which
means $\De$ is a minimal ${\cal T}$-invariant set and item~{\it
\ref{n7}} is proved.

Now we prove item~{\it \ref{n8}} through an application of Zorn's
lemma. Suppose $\Delta \subset {\cal T}(M)$ is a nonempty ${\cal
T}$-invariant set and $\S \in \Delta$. Using the definition of
${\cal T}$-invariance, an elementary argument proves ${\cal
T}(\Sigma )$ is a nonempty ${\cal T}$-invariant set in $\Delta$
which is a closed set of ${\cal T}(M)$; essentially, this is because
the set of limit points of a set in a topological space forms a
closed set (also see the proofs of items~{\it \ref{n4}} and {\it
\ref{n5}} for this type of argument). Next consider the set $\Lambda
$ of all nonempty ${\cal T}$-invariant subsets of $\Delta $ which
are closed sets in ${\cal T}(M)$, and as we just observed, this
collection is nonempty. Also, observe that $\Lambda $ has a partial
ordering induced by inclusion $\subset$.

We first check that any linearly ordered set in $\Lambda $ has a
lower bound, and then apply Zorn's Lemma to obtain a minimal element
of $\Lambda$ with respect to the partial ordering $\subset$. To do
this, suppose $\Lambda '\subset \Lambda $ is a nonempty linearly
ordered subset and we will prove that the intersection $\bigcap
_{\Delta'\in \Lambda '}\Delta '$ is an element of $\Lambda $. In our
case, this means that we only need to prove that such an
intersection is nonempty, because the intersection of closed
(respectively ${\cal T}$-invariant) sets in a topological space is a
closed set (respectively ${\cal T}$-invariant) set. Since each
element of $\Lambda'$ is a closed set of ${\cal T}(M)$ and the
finite intersection property holds for the collection $\Lambda'$,
then the compactness of $\TM$ implies $\bigcap _{\Delta'\in \Lambda
'}\Delta ' \neq \mbox{\O}$. Thus, $\bigcap _{\Delta'\in \Lambda
'}\Delta ' \in \Lambda$ is a lower bound for $\Lambda'$. By Zorn's
lemma applied to $\Lambda$ under the partial ordering $\subset$,
$\Delta$ contains a smallest, nonempty, closed ${\cal T}$-invariant
 set $\Omega$. We now check that $\Omega$ is a nonempty, minimal
${\cal T}$-invariant subset of $\Delta$. If $\Omega'$ is a nonempty
${\cal T}$-invariant subset of $\Omega$, then there exists a
$\Sigma'\in\Omega'$. By our previous arguments, ${\cal
T}(\Sigma')\subset \Omega'\subset \Omega$ is a nonempty ${\cal
T}$-invariant set in $\Delta$ which is a closed set in ${\cal
T}(M)$, i.e., ${\cal T}(\S')\in \Lambda$. Hence, by the minimality
property of $\Omega$ in $\Lambda$, we have ${\cal T}(\Sigma')=
\Omega'=\Omega$. Thus, $\Omega$ is a nonempty, minimal ${\cal
T}$-invariant subset of $\Delta$, which proves item~{\it \ref{n8}}.

Let $\Delta\subset {\cal T}(M)$ be a nonempty, minimal ${\cal
T}$-invariant set and let $\S\in \Delta$. By item~{\it \ref{n7}},
${\cal T}(\S)=\Delta$. Since ${\cal T}(\Sigma)$ is a closed set in
$\TM$ and ${\cal T}(M)$ is compact, then $\Delta$ is compact. Since
$\S\in {\cal T}(\S)=\Delta$, item~{\it \ref{n6}} implies $\Delta$ is
also connected which completes the proof of item~{\it \ref{n9}}.
\end{proof}

\begin{remark}\label{rm25}
{\rm It turns out that any complete, connected, noncompact, embedded
$CMC$ surface $M\subset\rth$ with compact boundary and bounded
second fundamental form, is properly embedded in $\rth$, has a fixed
sized regular neighborhood on its mean convex side and so has
cubical area growth; these properties of $M$ follow from simple
modifications of the proof of these properties in the case when $M$
has empty boundary (see~\cite{mr7,mt3}). For such an $M$, the space
${\cal T}(M)$ also can be defined and consists of a nonempty set of
strongly Alexandrov embedded $CMC$ surfaces without boundary. We
will use this remark in the next section where $M$ is allowed to
have compact boundary. Also we note that items~{\it \ref{n4}} - {\it
\ref{n9}} of the Dynamics Theorem make sense under small
modifications and hold for properly embedded separating $CMC$
hypersurfaces $M$ with bounded second fundamental form in noncompact
homogeneous $n$-manifolds $N$, where ${\cal T}(M)$ is the set of
connected properly immersed surfaces that pass through a fixed base
point of $N$ and which are components of limits of $M$ under a
sequence of ``translational" isometries of $N$ which take a
divergent sequence of points in $M$ to the base point; see
~\cite{mt5} for details. }
\end{remark}

\section{The Minimal Element Theorem.}

In this section, we give applications of the Dynamics Theorem to the
theory of complete embedded $CMC$ surfaces $M$ in $\rth$ with
bounded second fundamental form and compact boundary. Let $R$ be the
radial distance to the origin in $\rth$. We will obtain several
results concerning the geometry of minimal elements in ${\cal
T}(M)$, when the area growth of $M$ is less than cubical in $R$ or
when the genus of the surfaces $M\cap \B(R)$ grows slower than
cubically in $R$. With this in mind, we now define some growth
constants for the area and genus of $M$ in $\rth$.

For any $p\in M$, we denote by $M(p,R)$ the connected component of
$M\cap \B(p,R)$ which contains $p$; if $M$ is not embedded at $p$
and there are two immersed components $M(p,R)$, $M'(p,R)$
corresponding to two pointed immersions, then in what follows we
will consider both of these components separately.

\begin{definition}[Growth Constants]
For $n=1,2,3,$ we define:
$$A_{\sup}(M,n)=\limsup \sup_{p\in M}({\rm Area}[M(p,R)]\cdot R^{-n}),$$
$$A_{\inf}(M,n)=\liminf \inf_{p\in M}({\rm Area}[M(p,R)]\cdot R^{-n}),$$
$$G_{\sup}(M,n)=\limsup \sup_{p\in M}({\rm Genus}[M(p,R)]\cdot R^{-n}),$$
$$G_{\inf}(M,n)=\liminf \inf_{p\in M}({\rm Genus}[M(p,R)]\cdot R^{-n}).$$
\end{definition}

In the above definition, note that $\sup_{p\in M}({\rm
Area}[M(p,R)]\cdot R^{-n})$ and the other similar expressions are
functions from $(0,\infty)$ to $\R$ and therefore they each have a
$\limsup$ or a $\liminf$, respectively.

By item~{\it \ref{n2}}\, of Theorem~\ref{T} and Remark~\ref{rm25},
$A_{\sup}(M,3)$ is a finite number. We now prove that
$G_{\sup}(M,3)$ is also finite. Since $M$ has  bounded second
fundamental form, it admits a triangulation $T$ whose edges are
geodesic arcs or smooth arcs in the boundary of $M$ of lengths
bounded between two small positive numbers, and so that the areas of
2-simplices in $T$ also are bounded between two small positive
numbers.   Let $T(M(p,R))$ be the set of simplices in $T$ which
intersect $M(p,R)$.  Note that for $R$ large, the number of edges in
$T(M(p,R)) $ which intersect $M(p,R)$ is less than some constant $K$
times the area of $M(p,R)$, where $K$ depends only on the second
fundamental form of $M$. Hence, the number of generators of the
first homology group $H_1(T( M(p,R)),\R)$ is less than $K$ times the
area of $M(p,R)$.  Since there are at least $ {\rm Genus}[M(p,R)]$
linearly independent simplicial homology classes
 in $H_1(T( M(p,R)),\R)$, then
\begin{equation}\label{eq5} {\rm Genus}[M(p,R)] \leq K {\rm Area}[M(p,R)]
\quad \text{for } R \text{ large}.\end{equation} In particular,
since $A_{\sup}(M,3)$ is finite, equation \eqref{eq5} implies that
$G_{\sup}(M,3)$ is also finite.

\begin{definition} {\rm Suppose that $M\subset \rth$ is a complete,
noncompact, connected embedded $CMC$ surface with compact boundary
(possibly empty) and with bounded second fundamental form.

\begin{enumerate} \item For any divergent sequence of
points $p_n\in M$, a subsequence of the translated surfaces $M-p_n$
converges to a properly immersed surface  of the same constant mean
curvature which bounds a smooth open subdomain on its mean convex
side. {\it Let $\Te(M)$ denote the collection of all such limit
surfaces.}

\item If there exists a constant $C>0$ such that for all $p,q\in
M$ with $d_{\Rsmall^3}(p,q)\geq 1$, $d_M(p,q) \leq C \cdot
d_{\Rsmall^3}(p,q)$, then we say that $M$ is {\it chord-arc.} (Note
that the triangle inequality implies that if $M$ is chord-arc and
$p,q\in M$ with $d_{\Rsmall^3}(p,q)<1$, then $d_{M}(p,q)<6C$.)
 \end{enumerate}

 }
 \end{definition}

We note that in the above definition and in Theorem~\ref{sp2} below,
the embedded hypothesis on $M$ can be replaced by the weaker
hypothesis that $M$ has a fixed size one-sided neighborhood on its
mean convex side (see Remark~\ref{rm25}).

We now state the main theorem of this section. For the statement of
this theorem, recall that a plane $P\subset \rth$ is a {\it plane of
Alexandrov symmetry} for a surface $M\subset \rth$, if it is a plane
of symmetry which separates $M$ into two open components $M^+,$
$M^-,$ each of which is a graph over a fixed subdomain of $P$.

\begin{theorem}[Minimal Element Theorem] \label{sp2} Let $M\subset
\rth$ be a complete,  noncompact, connected embedded $CMC$ surface
with possibly empty compact boundary and bounded second fundamental
form. Then the following statements hold.

\begin{enumerate}
\item \label{oneend} If $\S\in \TM$ is a minimal element, then
either every surface in $\Te(\S)$ is the translation of a fixed
Delaunay surface or every surface in $\Te(\S)$ has one end. In
particular, if $\S\in \TM$ is a minimal element, then every surface
in $\Te(\S)$ is connected and $\TS=\Te(\S)$.
\item \label{sca} Minimal elements of $\TM$ are chord-arc.
\item \label{n10} Let $\S$ be a minimal element of ${\cal T}(M)$.
For all $D, \, \ve>0$, there exists a $d_{\ve,D}>0$ such that the
following statement holds. For every compact domain $X\subset \S$
with extrinsic diameter less than $D$ and for each $q\in \S$, there
exists a smooth compact, domain $X_{q,\ve}\subset \S$ and a
translation, $\tau\colon \rth \to \rth$, such that
$$d_{\S}(q,X_{q,\ve})<d_{\ve,D}\;\;\; \mbox{and} \;\;\; d_{\cal H}(X,
\tau(X_{q,\ve}))<\ve,$$ where $d_{\S}$ is the distance function on
$\S$ and $d_{\cal H}$ is the Hausdorff distance on compact sets in
$\rth$. Furthermore, if $X$ is connected, then $X_{q,\ve}$ can be
chosen to be connected.
\item \label{half} If $M$ has empty boundary and lies in the halfspace
$\{x_3\geq 0\}$, then some minimal element of ${\cal T}(M)$ has the
$(x_1,x_2)$-plane as a plane of Alexandrov symmetry.
\item \label{balls} If $E$ is an end
representative\footnote{A proper noncompact domain $E\subset M$ is
called an {\it end representative} for $M$ if it is connected and
has compact boundary.} of $M$ such that $\rth - E$ contains  balls
of arbitrarily large radius, then ${\cal T}(M)$ contains a surface
with a plane of Alexandrov symmetry.
\item \label{inf3} The following statements are equivalent:
\begin{enumerate}
\item \label{A3} $A_{\inf}(M,3)=0.$
\item  \label{G3} $G_{\inf}(M,3)=0.$
\item  \label{sym} ${\cal T}(M)$ contains a minimal element with a plane of
Alexandrov symmetry.
\item  \label{Afinite2} $A_{\inf}(M,2)$ is finite.
\item  \label{Gfinite2} $G_{\inf}(M,2)$ is finite.
\end{enumerate}
\item \label{infty} If $M$ has an infinite number of ends, then  there exists a
minimal element in ${\cal T}(M)$ with a plane of Alexandrov
symmetry.
\item If ${\cal T}(M)$ does not contain an element with a plane of
Alexandrov symmetry, then the following statements hold.
\begin{enumerate}
\item \label{bal_a}  There exists a constant $F$ such that for every end
representative $E$ of a surface in $\Te(M)$, there exists a positive
number $R(E)$ such that
$$[\rth -\B(R(E))]\subset \{x\in \rth \mid d_{\Rsmall^3} (x,E)<F\}.$$
In particular, if $E_1$ and $E_2$ are end representatives of a
surface in $\Te(M)$, then for $R$ sufficiently large, for any $x\in
E_1 -\B(R)$, $d_{\Rsmall^3} (x,E_2-\B(R))<F\}$.
\item \label{bal_b} There is a uniform upper bound on the number of ends of
any element in $\Te(M)$. In particular, there is a uniform upper
bound on the number of components of any element in $\Te(M)$.
\end{enumerate}
\item \label{inf2} Suppose $\Sigma$ is a minimal element of ${\cal T}(M)$. Then
the following statements are equivalent.
\begin{enumerate}
\item  \label{A2} $A_{\inf}(\Sigma, 2)=0$.
\item  \label{G2} $G_{\inf}(\Sigma, 2)=0$.
\item  \label{D} $\Sigma$ is a Delaunay surface.
\item  \label{Afinite1} $A_{\inf}(\Sigma,1)$ is finite.
\item  \label{Gfinite1} $G_{\inf}(\Sigma,1)$ is finite.
\end{enumerate}

\end{enumerate}
\end{theorem}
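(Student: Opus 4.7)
The plan is to prove Theorem~\ref{sp2} by systematically exploiting the \emph{recurrence} of minimal elements. By item~\ref{n7} of the Dynamics Theorem, if $\S$ is a minimal element of $\TM$ and $\De\subset\TM$ is the minimal ${\cal T}$-invariant set containing it, then ${\cal T}(\S')=\De$ for every $\S'\in\De$. Geometrically, every bounded piece of every surface in $\De$ reappears, up to arbitrarily small Hausdorff error and within arbitrarily small intrinsic distance, near every prescribed point of every surface in $\De$. The structural conclusions of the theorem all come from combining this recurrence with three external tools: the uniform local area and fixed-size regular neighborhood estimates of item~\ref{n2} of Theorem~\ref{T}, the Alexandrov moving-plane reflection, and the Korevaar--Kusner--Solomon (KKS) classification of annular ends of CMC surfaces as asymptotic to Delaunay surfaces.

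Item~\ref{n10} is essentially a direct quantitative repackaging of recurrence via a diagonal compactness argument in the metric $d_{\TM}$ from item~\ref{n5} of the Dynamics Theorem. For item~\ref{sca}, if chord-arc failed on $\S$ one would find points $p_n,q_n\in\S$ with $d_{\Rsmall^3}(p_n,q_n)$ bounded and $d_\S(p_n,q_n)\to\infty$; after translating by $-p_n$ and taking subsequential limits using item~\ref{n2}, two disjoint sheets of a limit surface in $\Te(\S)$ would sit in a bounded ball about the origin, which, pushed back into $\S$ itself by recurrence, contradicts the uniform regular neighborhood. For item~\ref{oneend}, any multi-ended $\S'\in\Te(\S)$ can be approximated by translated pieces of $\S$; separating planes between the ends of $\S'$ get exported by recurrence into $\S$, and KKS applied to the annular neighborhoods so produced, combined with the maximum principle to rule out non-Delaunay behavior, forces the Delaunay alternative. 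The final clause $\TS=\Te(\S)$ follows since a one-ended minimal element cannot split off additional components in a translational limit.

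For the Alexandrov-symmetry package (items~\ref{half},~\ref{balls},~\ref{infty}, and the conclusion~\ref{sym} inside item~\ref{inf3}), one runs the standard moving-plane argument from outside. In item~\ref{half} the infimum of heights of horizontal translates of $M$ produces, after passage to a limit, a surface in $\TM$ whose doubling across $\{x_3=0\}$ is strongly Alexandrov embedded; item~\ref{n8} of Theorem~\ref{T} then selects a minimal element inside this symmetric family. Item~\ref{balls} is the general version: arbitrarily large balls in $\rth-E$ yield, after translation and limit, halfspaces disjoint from a limit surface, from which symmetry follows by the moving plane. Item~\ref{infty} reduces to item~\ref{balls} by a pigeonhole argument on the complementary components of infinitely many ends in $\rth$. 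The converse structural items~\ref{bal_a}--\ref{bal_b} are the contrapositive: absence of Alexandrov symmetry in $\TM$ precludes arbitrarily large empty balls near the ends, forcing each end to fatten out uniformly and bounding the total number of ends.

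The growth dichotomies in items~\ref{inf3} and~\ref{inf2} are now formal consequences. The bound $\mathrm{Genus}[M(p,R)]\le K\cdot\mathrm{Area}[M(p,R)]$ from~\eqref{eq5}, combined with the lower bound in~\eqref{eq4}, matches the $A_{\inf}$ and $G_{\inf}$ conditions in both items. In item~\ref{inf3}, sub-cubic area growth at some point of $M$ forces complementary balls in $\rth$ of arbitrarily large radius, triggering item~\ref{balls}; the finiteness of $A_{\inf}(M,2)$ and $G_{\inf}(M,2)$ is read off from the resulting tube-like Alexandrov symmetric structure. For item~\ref{inf2}, a minimal $\S$ with sub-quadratic area growth at one point has sub-quadratic growth \emph{everywhere} by recurrence, hence certainly sub-cubic, so Alexandrov symmetric by the preceding step; combined with KKS, $\S$ must be a Delaunay surface, and the converse is immediate from the linear area and zero genus of Delaunay surfaces. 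I expect the main obstacle to lie in item~\ref{oneend} and in the uniform end-count in item~\ref{bal_b}: both require promoting the KKS finite-topology classification to surfaces that are not a priori of finite topology, and recurrence of minimal elements is exactly the device that makes this promotion work.
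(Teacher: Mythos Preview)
Your proposal has the right overall architecture---recurrence of minimal elements plus the three external tools---but two of the core items have genuine gaps, and the dependency order is partly inverted.

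\textbf{Item~\ref{sca} (chord-arc).} Your contradiction mechanism is wrong. Two disjoint sheets of a limit surface sitting in a bounded ball do \emph{not} contradict the fixed-size regular neighborhood: strongly Alexandrov embedded surfaces are allowed to self-touch from the mean convex side, and limits in $\Te(\S)$ are allowed to be disconnected. The actual contradiction is that the limit surface in $\Te(\S)$ has at least two components (one through $\vec 0$, one through $\lim(q_n-p_n)$), while by item~\ref{oneend} every surface in $\Te(\S)$ is connected. So item~\ref{sca} logically depends on item~\ref{oneend}, not the other way around. Moreover, even once you know the function $f(R)$ bounding intrinsic by extrinsic distance exists for bounded $R$, you still have to upgrade this to a global chord-arc constant; the paper does this by using item~\ref{bal_a} (or its Alexandrov-symmetric analogue) to guarantee that every ball of a fixed radius hits $\S$, and then chaining short hops along a straight segment. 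You omit this step.

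\textbf{Item~\ref{oneend}.} Your sketch (``separating planes between the ends get exported by recurrence \ldots\ KKS applied to the annular neighborhoods'') is not an argument: ends of a CMC surface need not be separated by planes, and KKS only applies once you already have an annular end. The paper's proof is quite different and relies on items~\ref{bal_a}--\ref{bal_b}, which must therefore be proved first. In the case where $\S$ has no plane of Alexandrov symmetry, one takes $\S'\in\Te(\S)$ with the \emph{maximal} number $n\ge 2$ of ends (finite by item~\ref{bal_b}); by item~\ref{bal_a} the ends of $\S'$ stay uniformly close, so every surface in $\Te(\S')$ has at least $n$ components, hence exactly $n$. But $\S$ itself is one component of some $\S''\in\Te(\S')$, and since $\S$ (being minimal and non-Delaunay) has more than one end, $\Te(\S'')$ contains a surface with at least $n+1$ components---contradicting maximality of $n$. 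In the Alexandrov-symmetric case the paper invokes Theorem~\ref{special} to produce an annular end on any multi-ended $\S'$, whence $\TS$ contains a Delaunay surface and minimality forces $\S$ itself to be Delaunay.

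Smaller issues: your description of item~\ref{half} (``doubling across $\{x_3=0\}$'') skips the moving-plane analysis of the critical height $T$, which is where the work is; your item~\ref{infty} ``pigeonhole on complementary components'' does not obviously produce large balls---the paper instead shows directly that infinitely many ends force $A_{\inf}(M,3)=0$ via an area-counting argument; and in item~\ref{inf2} ``combined with KKS'' glosses over the nontrivial step of producing either an annular end or a \emph{second} (vertical) plane of Alexandrov symmetry so that $\S$ is cylindrically bounded.
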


The following corollary gives some immediate consequences of
Theorem~\ref{sp2}. The proof of this corollary appears after the
proof of Theorem~\ref{sp2}.

\begin{corollary} \label{cor2} Let $M\subset
\rth$ be a complete, noncompact, connected, embedded $CMC$ surface
with compact boundary and bounded second fundamental form. Then the
following statements hold.
\begin{enumerate}
\item $A_{\sup}(M,3)=0\quad \implies \quad G_{\sup}(M,3)=0
 \quad \implies $ \\ $\implies \quad$
Every minimal element in ${\cal T}(M)$ has a plane of Alexandrov
symmetry.
\item $A_{\sup}(M,2)=0 \quad \implies \quad G_{\sup}(M,2)=0
\quad \implies$ \\ $\implies \quad$
Every minimal element in ${\cal T}(M)$ is a Delaunay surface.
\end{enumerate}
\end{corollary}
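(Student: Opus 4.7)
\medskip

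\noindent\textbf{Proof plan.} The implications $A_{\sup}(M,n)=0\Longrightarrow G_{\sup}(M,n)=0$ (for $n=2,3$) are immediate from the triangulation inequality~\eqref{eq5}: taking $\sup_{p\in M}$ and then $\limsup_{R\to\infty}$ of $\mathrm{Genus}[M(p,R)]\le K\cdot\mathrm{Area}[M(p,R)]$ gives $G_{\sup}(M,n)\le K\cdot A_{\sup}(M,n)=0$.

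For the remaining implications in both parts, the strategy is to pass from $M$ to an arbitrary minimal element $\Sigma\in\TM$ and then apply items~\ref{inf2} and~\ref{inf3} of Theorem~\ref{sp2}. The key observation is that $G_{\sup}$ does not increase under translational limits. Writing $\Sigma=\lim_n(M-p_n)$ and fixing $q\in\Sigma$, choose $q_n\to q$ with $r_n:=q_n+p_n\in M$; by the $C^2$-convergence on compact sets, for every generic $R$ the translated components $M(r_n,R)-p_n$ converge in $C^2$ to $\Sigma(q,R)$ and in particular share its genus for $n$ large, so
\[
\mathrm{Genus}[\Sigma(q,R)]=\lim_n\mathrm{Genus}[M(r_n,R)]\le\sup_{p\in M}\mathrm{Genus}[M(p,R)].
\]
Taking $\sup_{q\in\Sigma}$ followed by $\limsup_{R\to\infty}$ yields $G_{\sup}(\Sigma,n)\le G_{\sup}(M,n)$, so $G_{\sup}(M,n)=0$ forces $G_{\inf}(\Sigma,n)\le G_{\sup}(\Sigma,n)=0$ for every $\Sigma\in\TM$.

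Part~2 now follows at once: applying item~\ref{inf2} of Theorem~\ref{sp2} to the minimal element $\Sigma$, the equivalence~(\ref{G2})$\Longleftrightarrow$(\ref{D}) shows that $\Sigma$ is a Delaunay surface.

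For Part~1, I apply item~\ref{inf3} to $\Sigma$: from~(\ref{G3})$\Longleftrightarrow$(\ref{sym}), ${\cal T}(\Sigma)$ contains a minimal element with a plane of Alexandrov symmetry. Since $\Sigma$ is itself minimal in $\TM$, item~\ref{n7} of Theorem~\ref{T} gives ${\cal T}(\Sigma)=\Delta$, and $\Delta$ admits no proper nonempty ${\cal T}$-invariant subset, so every element of $\Delta$ qualifies as a minimal element of ${\cal T}(\Sigma)$; hence at least one $\Sigma'\in\Delta$ has a plane of Alexandrov symmetry. The real obstacle is upgrading ``some $\Sigma'\in\Delta$'' to ``$\Sigma$ itself''. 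My plan is to invoke the dichotomy in item~\ref{oneend}: either $\Delta$ is the orbit under translation of a single Delaunay surface, in which case $\Sigma$ is a Delaunay translate and is automatically Alexandrov-symmetric, or every element of $\Te(\Sigma)$ has exactly one end. I expect the one-ended alternative to be ruled out using the characterization theorem for surfaces with a plane of Alexandrov symmetry from Section~\ref{sc4}: an Alexandrov-symmetric element with only one end must have infinite topology, and a careful choice of translational limits of $\Sigma'$ along that end should produce a Delaunay surface inside $\Delta$, contradicting the one-ended hypothesis on every element of $\Te(\Sigma)=\Delta$.
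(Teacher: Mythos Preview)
Your argument is correct and matches the paper up through the point where you obtain, inside the minimal set $\Delta={\cal T}(\Sigma)$, some $\Sigma'$ with a plane of Alexandrov symmetry. The gap is in the final ``upgrade'' step for Part~1.

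Your plan invokes the dichotomy of item~\ref{oneend} and then, in the one-ended branch, asserts that translational limits of the Alexandrov-symmetric $\Sigma'$ ``should produce a Delaunay surface inside $\Delta$''. This is not justified: a one-ended, infinite-genus, Alexandrov-symmetric surface can perfectly well have all of its translational limits again one-ended and Alexandrov-symmetric, with no Delaunay surface appearing. Nothing in Section~\ref{sc4} forces a Delaunay limit in this situation, so the contradiction you are hoping for does not materialize.

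The paper closes this step with a one-line observation you are missing: since $\Sigma$ is a minimal element, item~\ref{n7} of Theorem~\ref{T} gives $\Sigma\in{\cal T}(\Sigma)={\cal T}(\Sigma')$. Now simply note that \emph{the property of having a plane of Alexandrov symmetry is inherited under translational limits}: if $\Sigma'$ is symmetric across a horizontal plane $P$, then $\Sigma'$ lies in a slab about $P$, so for any divergent $p_n\in\Sigma'$ the translated planes $P-p_n$ stay in a bounded slab and subconverge; the limit surface is then symmetric across the limit plane. Hence every element of ${\cal T}(\Sigma')$, in particular $\Sigma$, has a plane of Alexandrov symmetry. This replaces your entire dichotomy argument and requires no appeal to item~\ref{oneend} or to Section~\ref{sc4}.
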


We make the following conjecture related to the Minimal Element
Theorem. Note that item~{\it \ref{inf2}} of Theorem~\ref{sp2}
implies that the conjecture holds for $n=1$.

\begin{conjecture} Suppose that $M\subset \rth$ satisfies the
hypotheses of Theorem~\ref{sp2}.  Then for any minimal element $\S
\in {\cal T}(M)$ and  for $n= 1, \,2,$ or  $3,$ $$\lim_{R\to \infty}
 {\rm Area}[\S\cap \B(R)]\cdot R^{-n} \; \text{  and  } \; \lim_{R\to \infty}
 {\rm Genus}[\S\cap \B(R)]\cdot R^{-n}$$ exist (possibly infinite).
 Furthermore,
$$A_{\inf}(\S,n)=A_{\sup}(\S,n)=\lim_{R\to \infty}
 {\rm Area}[\S\cap \B(R)]\cdot R^{-n}$$
$$G_{\inf}(\S,n)=G_{\sup}(\S,n)=\lim_{R\to \infty}
 {\rm Genus}[\S\cap \B(R)]\cdot R^{-n}.$$

 \end{conjecture}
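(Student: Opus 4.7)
The plan is to exploit the recurrence property for minimal elements established in item~\ref{n10} of Theorem~\ref{sp2}: every compact subdomain $X \subset \S$ of bounded extrinsic diameter appears, up to a translation of $\rth$, within a bounded intrinsic distance $d_{\ve,D}$ of every point $q \in \S$, uniformly in $q$. This uniform recurrence is the key input---it expresses that $\S$ is ``almost periodic'' in the sense that local patches reappear with bounded intrinsic gaps---and it should force the area and genus densities to stabilize to genuine limits rather than merely bounded families.

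First I would collapse the $\sup_p$ and $\inf_p$ in the definitions of $A_{\sup}(\S,n)$, $A_{\inf}(\S,n)$, $G_{\sup}(\S,n)$, $G_{\inf}(\S,n)$ to a common function of $R$. Fix a base point $p_0 \in \S$ and set $X = \S(p_0,R)$; applied with $\ve$ small compared to $R^{-1}$, item~\ref{n10} produces, for every $q \in \S$, a compact domain $X_{q,\ve}$ at bounded intrinsic distance from $q$ that is $\ve$-Hausdorff-close to a translate of $X$. Combined with the uniform local area estimates from item~\ref{n2} of Theorem~\ref{T}, this should yield
\[
\bigl| \mathrm{Area}(\S(p,R)) - \mathrm{Area}(\S(q,R)) \bigr| = o(R^n) \quad \text{as } R \to \infty,
\]
and the analogous oscillation bound for genus via the linear comparison $\mathrm{Genus}[\S(p,R)] \leq K \cdot \mathrm{Area}[\S(p,R)]$ from \eqref{eq5}. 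After dividing by $R^n$, the $\sup$ and $\inf$ over $p$ differ by $o(1)$, so the equalities $A_{\inf}(\S,n) = A_{\sup}(\S,n)$ and $G_{\inf}(\S,n) = G_{\sup}(\S,n)$ will follow automatically once the limit in $R$ is known to exist along any fixed sequence of base points.

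Upgrading $\limsup$ and $\liminf$ to a genuine limit is the second step, which I would attack by a packing argument. Cover $\B(2R)$ by a finite family of balls of radius $R$ with controlled overlaps; for each center, use recurrence to place an approximate translated copy of $\S \cap \B(R)$ near that center inside $\S$; then sum areas, subtract the overlap contributions, and invoke Step~1 to obtain an almost-additive relation of the form $f(2R)/(2R)^n = f(R)/R^n + o(1)$, where $f(R) = \mathrm{Area}(\S \cap \B(R))$. A dyadic Fekete-type argument then forces $f(R)/R^n$ to converge, and Step~1 identifies the limit with the common value $A_{\inf}(\S,n) = A_{\sup}(\S,n)$. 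The same packing reasoning should carry over to the genus once one knows that $C^2$-close approximations preserve the handle count in the interior.

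The main obstacle will be the genus. Unlike area, genus is a global topological invariant and is not continuous under boundary perturbations: a small Hausdorff move can sever or glue handles across $\partial \B(R)$, so controlling $\mathrm{Genus}[\S \cap \B(R)]$ requires avoiding ``pathological'' radii on which many handles straddle the boundary sphere, probably via a Sard-type choice of $R$ along a positive-density subset. A second difficulty is that the conjecture is asserted uniformly for $n = 1,2,3$, but by item~\ref{inf2} the very finiteness of $A_{\inf}(\S,1)$ forces $\S$ to be Delaunay; the genuinely open content of the conjecture therefore concerns non-Delaunay minimal elements (for $n=2,3$), about which the Dynamics Theorem provides no structural information beyond the recurrence of item~\ref{n10}. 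Extracting convergence purely from this compactness-type recurrence, rather than from any rigidity classification of minimal elements, seems to be the true mathematical obstacle.
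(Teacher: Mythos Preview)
The statement you are attempting to prove is labeled a \emph{conjecture} in the paper, not a theorem; the authors do not supply a proof and explicitly leave it open. Their only remark is that the case $n=1$ follows from item~\ref{inf2} of Theorem~\ref{sp2}: if $A_{\inf}(\S,1)$ is finite then $\S$ is a Delaunay surface, and for a Delaunay surface all the asserted limits exist and coincide by direct periodic computation. For $n=2,3$ there is nothing in the paper to compare your proposal against.

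Regarding your strategy itself, there is a genuine gap already in Step~1. Item~\ref{n10} guarantees that a copy of $X=\S(p_0,R)$ appears within intrinsic distance $d_{\ve,D}$ of every $q\in\S$, but $d_{\ve,D}$ depends on $D$ (here $D\approx 2R$) with no quantitative control whatsoever; nothing in the paper prevents $d_{\ve,2R}$ from growing faster than any power of $R$, so the approximate copy $X_{q,\ve}$ need not lie inside $\S(q,R+o(R))$. Even granting that, the conclusion $|{\rm Area}(\S(p,R))-{\rm Area}(\S(q,R))|=o(R^n)$ would still not follow: item~\ref{n10} asserts that $X_{q,\ve}$ is Hausdorff-close to a translate of $\S(p_0,R)$, not that $X_{q,\ve}$ coincides with a ball-component $\S(q',R)$. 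The connected component $\S(q',R)$ may properly contain $X_{q,\ve}$ together with additional pieces of $\S$ and hence have strictly larger area; the recurrence of item~\ref{n10} places a patch of prescribed shape \emph{inside} $\S$ but says nothing about the complement of that patch. Your packing/Fekete step inherits the same defect. As you yourself observe in your final paragraph, extracting convergence from the purely qualitative recurrence of item~\ref{n10}, without a structure theorem for minimal elements, appears to be precisely the obstacle that keeps this statement a conjecture rather than a theorem.
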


\vspace*{.3cm}\noindent{\it Proof of Theorem~\ref{sp2}.} We
postpone the proofs of items~{\it \ref{oneend}, \ref{sca},
\ref{n10}} to after the proofs of the items~{\it \ref{half} -
\ref{inf2}} of the theorem.

Assume  that $M$ has empty boundary and $M\subset \{x_3\geq 0\}$.
Using techniques similar to the ones discussed by Ros and Rosenberg
in~\cite{ror1}, we now prove that some element of $\TM$ has a
horizontal plane of Alexandrov symmetry, that is, item~{\it
\ref{half}}. Let $W_M$ be the smooth open domain in $\rth-M$ on the
mean convex side of $M$. Note that $W_M \subset \{ x_3\geq 0\}$.
After a vertical translation of $M$, assume that $M$ is not
contained in a smaller halfspace of $\{x_3\geq 0\}.$ Since $M$ has a
fixed size regular neighborhood on its mean convex side and $M$ has
bounded second fundamental form, then for any generic and
sufficiently small $\ve>0$, $M_{\ve}=M\cap \{x_3\leq \ve\}$ is a
nonempty graph of small gradient over its projection to
$P_0=\{x_3=0\}$; we let $P_t=\{x_3=t\}$. Note that the mean
curvature vector to $M_{\ve}$ is upward pointing. In what follows,
$R_{P_t}\colon \rth\to \rth$ denotes reflection in $P_t$, while
$\Pi\colon \rth\to \rth$ denotes orthogonal projection onto $P_0$.

For any $t>0$, consider the new surface with boundary,
$\widehat{M}_t$, obtained by reflecting $M_t=M\cap\{x_3\leq t\}$
across the plane $P_t$, i.e., $\h{M}_t=R_{P_t}(M_t)$. Let  $
T=\sup \{t\in (0,\infty)\mid {\rm for}\;\; t'<t$, the surface
$M_{t'}$ is a graph over its projection to $P_0$,
$\widehat{M}_{t'}\cap M=
\partial \widehat{M}_{t'}=\partial M_{t'}$ and the infimum of the
angles that the tangent spaces to $M$ along $\partial M_t$ make with
vertical planes is bounded away from zero\}. Recall that by height
estimates for $CMC$ graphs with zero boundary values~\cite{ror1},
$\ve<T\leq \frac{1}{H}$, where $H$ is the mean curvature of $M$.

If there is a point $p\in \partial M_T$ such that the tangent plane
$T_pM$ is vertical, then the classical Alexandrov reflection
principle implies that the plane $P_T$ is a plane of Alexandrov
symmetry. Next suppose that the angles that the tangent spaces to
$M_T$ make with $(0,0,1)$ along $\partial M_T$ are not bounded away
from zero. In this case, let $p_n\in \partial M_T$ be a sequence of
points such that the tangent planes $T_{p_n}M$ converge to the
vertical (the dot products of the normal vectors to the planes with
$(0,0,1)$ are going to zero) and let $\Sigma \in {\cal T}(M)$ be a
related limit of the translated surfaces $M-p_n$. One can check that
$\Sigma \cap \{x_3<0\} $ is a graph over $P_0$ and that its tangent
plane at the origin is vertical. Now the usual application of the
boundary Hopf maximum principle at the origin, or equivalently, the
Alexandrov reflection argument, implies $P_0$ is a plane of
Alexandrov symmetry for $\Sigma$.

Suppose now that the tangent planes of $M$ along $\partial M_T$ are
bounded away from the vertical. In this case, $P_T$ is not a plane
of Alexandrov symmetry. So, by the usual application of the
Alexandrov reflection principle, we conclude that $\widehat{M}_T
\cap M=\partial \widehat{M}_T=\partial M_T$. By definition of $T$,
there exist $\delta_n>0$, $\delta_n\rightarrow 0$, such that
$F_n=\widehat{M}_{T+\delta_n}\cap M$ is not contained in $\partial
M_{T+\delta_n}$. We first show that not only is $\Pi(F_n)$ contained
in the interior of $\Pi(M_T)$, but for some $\eta>0$, it stays at a
distance at least $\eta$ from $\Pi(\partial M_T)$ for $\delta_n$
sufficiently small. In fact, since we are assuming that the tangent
planes of $M$ along $\partial M_T$ are bounded away by a fixed
positive angle from the vertical, if $\delta$ is small enough, the
tangent planes of $M$ along $\partial M_{T+\delta}$ are also bounded
away by a fixed positive angle from the vertical. Thus, the previous
statement on the existence of an $\eta >0$ is a consequence of the
existence of a fixed size one-sided regular neighborhood for $M$ in
$W_M$.

The discussion in the previous paragraph implies that there exists a
sequence of points $p_n\in M_T$ which stay at a distance at least
$\eta$ from $\partial M_T$ and such that the distance from
$R_T(p_n)$ and $M-M_T$ is going to zero. The fact that $p_n$ stays
at a distance at least $\eta$ from $\partial M_T$ implies that for
$n$ large there exists an $\ve>0$ such that $R_T(\B(p_n,\ve)\cap M)$
is disjoint from $M$ and it is a graph over $\Pi(\B(p_n,\ve)\cap
M)$. Consider the element $\Sigma\in {\cal T}(M)$ obtained as a
limit of the translated surfaces $M-\Pi(p_n)$ and let $\lim_{n\to
\infty}p_n=p=(0,0,S)\in \S$. From the way $\Sigma$ is obtained, $p$
is a positive distance from $\partial \S_T$. Moreover, $R_T(p)\in
\S-\S_T$ and $\widehat{\S}_T$ is tangent to $\S-\S_T$ and lies on
its mean convex side. The maximum principle implies that $P_T$ is a
plane of Alexandrov symmetry which contradicts the assumption that
tangent planes of $M$ along $\partial M_T$ are bounded away by a
fixed positive angle from the vertical. This completes the proof
that there exists a surface $\Sigma \in \TM$ with the
$(x_1,x_2)$-plane as a plane of Alexandrov symmetry. It then follows
from item~{\it \ref{n8}} of Theorem~\ref{T} that the nonempty ${\cal
T}$-invariant set $\TS\subset \TM$ contains minimal element of $\TM$
with the $(x_1,x_2)$-plane as a plane of Alexandrov symmetry, which
proves item~{\it \ref{half}}.

We now prove item~{\it \ref{balls}} holds. Assume now that $M$ has
possibly nonempty compact boundary and there exists a sequence of
open balls $\B(q_n,n)\subset \rth-M$. Note that these balls can be
chosen so that they are at distance at least $n$ from the boundary
of $M$ and so that there exists a sequence of points $p_n\in\partial
\B(q_n,n)\cap M$ diverging in $\rth$. After choosing a subsequence,
we may assume that the translated balls $\B(q_n,n)-p_n$ converge to
an open halfspace $K$ of $\rth$ and a subsequence of the translated
surfaces $M-p_n$ gives rise to an element $M_\infty\in{\cal T}(M)$
with $M_\infty$ contained in the halfspace $\rth -K$ and $\partial
M_{\infty}=\mbox{\O}$. By the previous discussion when $M$ has empty
boundary (item {\it \ref{half}}), $ {\cal T}(M_\infty)\subset {\cal
T}(M)$ contains a minimal element with a plane of Alexandrov
symmetry. This completes the proof of item~{\it \ref{balls}}.

We now prove item~{\it \ref{inf3}}\, in the theorem. First observe
that ${\it  \ref{Afinite2} \implies~\ref{A3}}$ and that ${\it
\ref{Gfinite2}\implies~\ref{G3}}$. Also, equation \eqref{eq5}
implies that ${\it  \ref{A3}\implies~\ref{G3}}$ and that ${\it
\ref{Afinite2}\implies~\ref{Gfinite2}}$. We now prove that ${\it
\ref{sym}\implies~\ref{Afinite2}}$. Suppose that ${\cal T}(M)$
contains a minimal element $\Sigma$ which has a plane of Alexandrov
symmetry and let $W_{\Sigma}$ denote the embedded three-manifold on
the mean convex side of $\Sigma$. In this case $W_\S$ is contained
in a slab, and by item~{\it \ref{n2}} of Theorem~\ref{T}, the area
growth of $\Sigma$ is comparable to the volume growth of
$W_{\Sigma}$. Note that the volume of $W_{\Sigma}$ grows at most
like the volume of the slab which contains it, and so, the volume
growth of $W_{\Sigma}$ and the area growth of $\S$ is at most
quadratic in $R$. By the definitions of ${\cal T}(M)$ and
$A_{\inf}(M,2)$, we see that $A_{\inf}(M,2)$ is finite which
implies~{\it  \ref{Afinite2}}.

In order to complete the proof of item~{\it \ref{inf3}}, it suffices
to show ${\it  \ref{G3}\implies  \ref{sym}}$. However, since the
proof of ${\it \ref{G3}\implies  \ref{sym}}$ uses the fact that
${\it  \ref{A3}\implies  \ref{sym}}$, we first show that ${\it
\ref{A3}\implies  \ref{sym}}$. Assume that $A_{\inf}(M,3)=0$ and we
will prove that ${\cal T}(M)$ contains a surface $\Sigma$ which lies
in a halfspace of $\rth$. Since $A_{\inf}(M,3)=0$, we can find a
sequence of points $\{p_n\}_n\subset M$ and positive numbers $R_n$,
$R_n\to \infty$, such that the connected component $M(p_n, R_n)$ of
$M\cap \overline{\B} (p_n,R_n)$ containing $p_n$ has area less than
$\frac{1}{n}R_n^3.$ Since $M$ has bounded second fundamental form,
there exists an $\ve>0$ such that for any $q\in \rth$, if
$\B(q,r)\cap M \not =\mbox{\O}$, then ${\rm Area}(\B(q,r+1)\cap
M)\geq \ve$. Using this observation, together with the inequality
${\rm Area}(M\cap \overline{\B}(p_n,R_n))\leq \frac{1}{n} R_n^3$ and
the equality ${\rm Volume}\, (\B(p_n, R_n))= \frac{4\pi}{3} R_n^3$,
we can find a sequence of points $q_n\in \B(p_n, R_n)$, numbers
$k_n$ with $k_n\to \infty$, such that $\B(q_n, k_n)\subset [\B(p_n,
\frac{R_n}{2})-M(p_n,R_n)]$ and such that there are points $s_n\in
\partial \B(q_n,k_n)\cap M(p_n, R_n)$ with $|s_n|\to \infty$ (see Figure \ref{fig1b}).
Let $\Sigma \in {\cal T}(M)$ be a limit surface arising from the
sequence of translated surfaces  $M(p_n, R_n) - s_n$. Note that
$\Sigma$ is disjoint from an open halfspace obtained from a limit of
a subsequence of the translated balls $\B(q_n, k_n)-s_n$. Since
$\Sigma$ lies in a halfspace of $\rth$, item~{\it \ref{half}} in the
theorem implies ${\cal T}(M)$ contains a minimal element with a
plane of Alexandrov symmetry. The existence of this minimal element
proves that  ${\it  \ref{A3} \implies   \ref{sym}}$.

%%%%%%%%%%%%%%%%%%%%%%%%%%%%%%%%%%%%%
\begin{figure}[h!]\begin{center}
\includegraphics[width=2.7in]{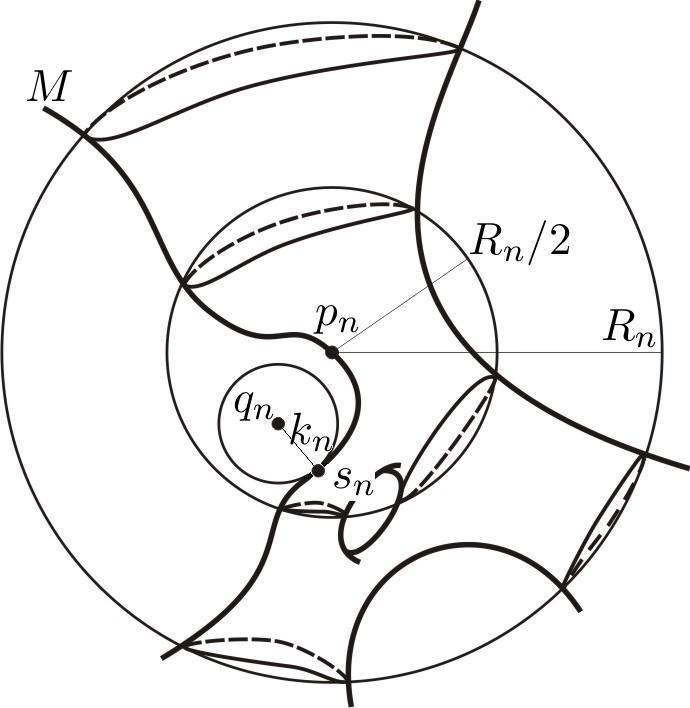}\end{center}
\caption{Finding large balls in the complement of $M(p_n,
R_n)$}\label{fig1b}
\end{figure}

%We now show that ${\it  \ref{sym} \implies  \ref{Gfinite2}}$. If ${\cal T}(M)$
%contains a minimal element $\Sigma$ with a plane of Alexandrov
%symmetry, then $\Sigma$ is a bigraph over a domain $\Delta \subset
%P$, where $P$ is a plane of Alexandrov symmetry. Note that the first
%homology group of $\Delta$ is generated by the simple closed
%boundary components of $\Delta$ and each such boundary component
%encloses a disk in $P-{\rm Int}(\Delta)$. The distance between these
%disks (and hence between their boundary curves) is bounded from
%below by a positive constant since $\Sigma$ has a fixed size regular
%neighborhood on its mean convex side. Hence, the first homology
%group of $\Delta$ grows at most quadratically in $R$ in balls in
%$\rth$ of radius $R$. Since $\Sigma -P$ consists of two planar
%domains which are graphs over $\Delta$, the definition of genus
%implies $G_{\inf}(\Sigma, 2)$ is finite. This proves {\it
% \ref{sym}} $\implies$ {\it  \ref{Gfinite2}}.

We now prove that ${\it  \ref{G3} \implies  \ref{sym}}$ and this
will complete the proof of item~{\it \ref{inf3}}. Assume  that
$G_{\inf}(M,3)=0$. Since $G_{\inf}(M,3)=0$, there exists a sequence
of points $p_n\in M$ and $R_n \to \infty$, such that the genus of
$M(p_n, R_n) \subset \overline{\B}(p_n,R_n)$ is less than
$\frac{1}{n} R_n^3$. Using the fact that the genus of disjoint
surfaces is additive, a simple geometric argument, which is similar
to the argument that proved ${\it  \ref{A3} \implies  \ref{sym}}$,
shows that we can find a sequence of points $q_n\in \B(p_n, R_n)$
diverging in $\rth$ and numbers $k_n$, with $k_n\to \infty$, such
that one of the following statements holds.

\begin{enumerate}
\item $\mbox{Genus}(M(q_n, k_n))=0$.
\item $\B(q_n, k_n)\subset [\B(p_n, \frac{R_n}{2})-M(p_n,R_n)]$
and, as $n$ varies, there exist points $s_n\in
\partial \B(q_n,k_n)\cap M(p_n, R_n)$ diverging in $\rth$.
\end{enumerate}

If statement 2 holds, then our previous arguments imply that ${\cal
T}(M)$ contains a surface $\Sigma$ which lies in a halfspace of
$\rth$ and that ${\cal T}(M)$ contains a minimal element with a
plane of Alexandrov symmetry. Thus, we may assume statement 1 holds.

Since statement 1 holds, then the sequence of translated surfaces
$M-q_n$ yields a limit surface $\Sigma \in \TM$ of genus zero. If
$\S$ has a finite number of ends, then $\S$ has an annular end $E$.
By the main theorem in~\cite{me17}, $E$ is contained in a solid
cylinder in $\rth$. Under a sequence of translations of $E$, we
obtain a limit surface $D\in \TS$ which is contained in a solid
cylinder. By item~{\it \ref{half}}, there is a minimal element
$D'\in {\cal T}(D)\subset \TM$ which has a plane of Alexandrov
symmetry; this conclusion also follows from the main result
in~\cite{kks1}.

Suppose now $\Sigma$ has genus zero and an infinite number of ends. For each $n\in
\N$, there exists numbers, $T_n$ with $T_n\to \infty$, such that the
number $k(n)$ of noncompact components, $$\{\S_1(T_n),
\S_2(T_n),\ldots,\S_{k(n)}(T_n)\},$$ in $\Sigma-\B(T_n)$ is at least
$n$. Fix points $p_i(n)\in \S_i(T_n)\cap \partial \B(2T_n)$, for
each $i\in \{1,2,\ldots, k(n)\}$. Note that $\sum_{i=1}^{k(n)}{\rm
Area}(\S(p_i(n),T_n))\leq {\rm Area} (\S\cap \B(3T_n)). $ Since $\S$
has no boundary, then ${\rm Area} (\S\cap \B(3T_n))\leq
\frac{4}{3}\pi c_2(3T_n)^3$ (see item~{\it \ref{n2}} of Theorem
\ref{T}). Therefore, we obtain that for all $n$, there exists an $i$,
such that
$$ {\rm Area }(\S(p_i(n),T_n))\leq \frac{c}{n}T_n^3,$$
for a fixed constant $c$. By definition of $A_{\inf}(\S,3)$, we
conclude that $A_{\inf}(\S,3)=0.$ Since we have shown that {\it
\ref{A3}} $\implies$ {\it \ref{sym}}, ${\cal T}(\S)$ contains a
minimal element $\S'$ with a plane of Alexandrov symmetry. Since
${\cal T}(\S)\subset {\cal T}(M)$, ${\cal T}(M)$ contains a minimal
element with a plane of Alexandrov symmetry. Thus ${\it  \ref{G3}
\implies \ref{sym}}$ which completes the proof of
 item~{\it \ref{inf3}}.

We next prove item~{\it \ref{infty}}. Assume that $M$ has an
infinite number of ends. If $M$ has empty boundary, then by the
arguments in the previous paragraph $A_{\inf}(M,3)=0$ and thus $\TM$
contains a minimal element with a plane of Alexandrov symmetry. By
Remark~\ref{rm25}, if $M$ has nonempty, compact boundary, then it
has a fixed size regular neighborhood on its mean convex side, which
is sufficient for item {\it 2} of Theorem \ref{T} to hold and then to apply the arguments in the previous paragraph. This
proves that item~{\it \ref{infty}} holds.

We next prove item~{\it \ref{bal_a}}. Arguing by contrapositive,
suppose that the conclusion of item~{\it \ref{bal_a}} fails to hold
and we will prove that $\TM$ contains an element with a plane of
Alexandrov symmetry. Since the conclusion of {\it \ref{bal_a}} fails
to hold, there exists a sequence of surfaces $\S(n)\in \Te(M)$ with
end representatives $E(n)$, and positive numbers $F(n)\to\infty$ as
$n\to\infty$ such that for any $R(n)>0$, there exist balls $B_n$  of
radius $F(n)$ such that
$$B_n\subset [\rth - (\B(R(n))\cup E(n))].$$

Choose $R(n)>F(n)$ sufficiently large so that $\partial E(n)\subset
\B(\frac{R(n)}{2})$. After rotating $B_n$ around an axis passing
through the origin, we obtain a new ball $K_n\subset \rth -
(\B(R(n))\cup E(n))$ of radius $F(n)$ such that $\partial K_n$
intersects $E(n)$ at a point $p_n$ of extrinsic distance at least $\frac{R(n)}{2}$ from $\partial E(n)$. After choosing a subsequence,
suppose that $E(n)-p_n$ converges to a surface $\S_{\infty}\in
\Te(M)$ which lies in a halfspace of $\rth$ which is a limit of some
subsequence of the translated balls $K_n-p_n$. By item~{\it
\ref{half}}, ${\cal T}(\S_\infty)\subset {\cal T}(M)$ contains a
surface with a plane of Alexandrov symmetry, which completes the
proof of item~{\it \ref{bal_a}}.

The proof of item~{\it \ref{bal_b}} is a modification of the proof
of item~{\it \ref{infty}}. In fact, if $\Sigma_n \in \Te(M)$ is a
sequence of surfaces with at least $n$ ends, $n$ going to infinity,
then $A_{\inf}(M,3)=0$, which implies that ${\cal T}(M)$ contains a
minimal element with a plane of Alexandrov symmetry.

 We now prove that item~{\it \ref{inf2}} holds. First observe
that ${\it \ref{Afinite1} \implies  \ref{A2}}$ and that ${\it
\ref{Gfinite1}\implies \ref{G2}}$. Also, equation \eqref{eq5}
implies that ${\it \ref{A2}\implies \ref{G2}}$ and that ${\it
\ref{Afinite1}\implies \ref{Gfinite1}}$. An argument similar to the
proof of ${\it \ref{sym} \implies \ref{Afinite2}}$ shows that ${\it
\ref{D} \implies \ref{Afinite1}}$. In order to complete the proof of
 item~{\it 9}, it suffices to show ${\it \ref{G2}\implies
\ref{D}}$.  Let $\S$ be a minimal element of $\TM$ satisfying~{\it
\ref{G2}}. By item~{\it \ref{inf3}}, there exists a minimal element
$\S'\in {\cal T}(\S)$ with a plane of Alexandrov symmetry. By
minimality of $\S$, $\S\in {\cal T}(\S')$, and so $\S$ also has a
plane $P$ of Alexandrov symmetry (the same plane as $\S'$ up to some
translation). In particular, $\S$ lies in a fixed slab in $\rth$.

After a possible rotation of $\S$, assume that $P=\{x_3=0\}$ and so,
$\Sigma \subset \{-a\leq x_3\leq a\}$ for some $a>0$. Since
$G_{\inf}(\S,2)=0$, there exists a sequence of points $p_n=(x_1(n),
x_2(n),0)\in \Sigma$, numbers $R_n$ with $R_n\to \infty $, such that
${\rm Genus} (\S(p_n,R_n))<\frac{1}{n}R_n^2.$ Similar to the proof
of ${\it \ref{G3} \implies \ref{sym}}$, the fact that $G_{\inf}(\S,
2)=0$ implies that we can find a sequence of points $q_n\in \B(p_n,
R_n)\cap P$ diverging in $\rth$ and numbers $k_n$, with $k_n\to
\infty$, such that one of the following statements holds.

\begin{enumerate}
\item $\mbox{Genus}(\S(q_n, k_n))=0.$
\item $\B(q_n, k_n)\subset [\S(p_n, \frac{R_n}{2})-\S(p_n,R_n)]$
and, as $n$ varies, there exist points $s_n\in
\partial \B(q_n,k_n)\cap \S(p_n, R_n)\cap P$ diverging in $\rth$.
\end{enumerate}

We will consider the two cases above separately. If statement  1
holds, then a subsequence of the translated surfaces $\S-q_n$ yields
a limit surface $\S_{\infty}\in {\cal T}(\S)$ of genus zero with $P$ as
a plane of Alexandrov symmetry. If $\S_{\infty}$ has a finite number
of ends, then it has an annular end. In this case, the end is
asymptotic to a Delaunay surface~\cite{kks1}. Therefore ${\cal
T}(\S)$ contains a Delaunay surface $\S'$ and since $\S$ is a
minimal element, $\S\in {\cal T}(\S')$ which implies $\S$ itself is
a Delaunay surface. Suppose $\S_\infty$ has an infinite number of
ends. Note that $\S_\infty$ lies in a slab which implies that ${\rm
Area}(\S_\infty\cap \B(R))\leq C_2R^2$ for some constant $C_2$. In
this case, a modification of the end of the proof that {\it
\ref{G3}} $\implies$ {\it \ref{sym}} shows that for each $n\in \N$,
there exist numbers $T_n$ with $T_n\to \infty$ such that the number
$k(n)$ of components $\{\S_1(T_n), \S_2(T_n),\ldots,
\S_{k(n)}(T_n)\}$ in $\S_{\infty} - \B(T_n)$ is at least $n$ and,
after possibly reindexing, there is a point $p_1(n)\in \S_1(T_n)\cap
\partial \B(2T_n)$, a constant $C$ such that ${\rm
Area}(\S_1(p_1(n), T_n)\leq \frac{C}{n} T_n^2$. This implies that
one can find diverging points $q_n\in \B(p_1(n), T_n)\cap P$ and
numbers $r_n\to \infty$ such that $\B(q_n,r_n)\subset [\B(p_1(n),
\frac{T_n}{2})-\S_1(p_1(n),T_n)]$ and there are points $s_n\in \partial{\B}(q_n, r_n)\cap \S_1(p_1(n), T_n)$ such that $|s_n|\to
\infty$. It follows that a subsequence of the surfaces $\S_1(p_1(n),
T_n)-s_n$ converges to a surface $\S_\infty$ which lies in halfspace
whose boundary plane is a vertical plane. Item {\it \ref{inf3}} of
Theorem \ref{T} implies that ${\cal T}(\S_\infty)$ contains a
surface $\S'$ with the plane $P$ as a plane of Alexandrov symmetry
as well as a vertical plane of Alexandrov symmetry. Therefore, $\S'$
is cylindrically bounded and so it is a Delaunay
surface~\cite{kks1}. Since $\S \in {\cal T}(\S')$, $\S$ is a
Delaunay surface~\cite{kks1}.

We now consider the case where statement 2 holds. A modification of
the proof of the case where statement 1 holds (this time translating
$\S$  by the points $-s_n$ instead by the points $-q_n$) then
demonstrates that there is a $\S'\in {\cal T}(\S)$ with both the plane
$P$ and a vertical plane as planes of Alexandrov symmetry. As
before, we conclude that $\S$ is a Delaunay surface. This completes
the proof of item~{\it 9}.

We now prove item~{\it \ref{oneend}}. Let $\S$ be a minimal element
in ${\cal T}(M)$. If $\S$ has a plane of Alexandrov symmetry and
$\Te(\S)$ contains a surface $\S'$ with more than one end, then
Theorem~{\it \ref{special}}, which does not depend on the proof of
this item, implies that $\S'$ has at least one annular end, from
which it follows that $\TS$ contains a Delaunay surface $D$. Since
$\S$ and $D$ are minimal elements of ${\cal T}(\S)$, then $\S\in
{\cal T}(\S)={\cal T}(D)$, and so $\S$ is a translation of $D$.
Since $\S$ is a Delaunay surface (a translation of $D$), then
clearly every surface in $\Te(\S)$ is also a translation of a
Delaunay surface, which proves item~{\it \ref{oneend}} under the
additional hypothesis that $\S$ has a plane of Alexandrov symmetry.

Thus, arguing by contradiction, suppose that $\S$ fails to have a
plane of Alexandrov symmetry and $\Te(\S)$ has a surface with more
than one end. Since $\S$ is a minimal element, then $\S\in {\cal
T}(\widetilde{\S})$ for any $\widetilde{\S}\in {\cal T}(\S)$, and so
no element of ${\cal T}(\S)$ has a plane of Alexandrov symmetry. By
 item~{\it \ref{bal_b}}, there is a bound on the number of ends of
any surface in $\Te(\S)$. Let $\S'\in \Te (\S)$ be a surface with
the largest possible number $n\geq2$ of ends and let $\{E_1, E_2,
\ldots, E_n\}$ be pairwise disjoint end representatives for its $n$
ends. By
 item~{\it \ref{bal_a}}, the ends $E_1, E_2,\ldots, E_n$ are
uniformly close to each other. It now follows from the definition of
$\Te(\S')$ that every element of ${\Te}(\S')$ must have at least $n$
components, each such component arising from a limit of translations of each of
the ends $E_1, E_2, \ldots, E_n$.

By our choice of $n$, we find that every surface in $\Te(\S')\subset
\Te (\S)$ has exactly $n$ components. From the minimality of $\S$,
$\S$ must be a component of some element $\S''\in \Te(\S')$. But
then our previous arguments imply $\Te(\S'')$ contains a surface
$\Delta$  with $n-1$ ends coming from translational limits of the
components of $\S''$ different from $\S$ and at least two additional
components (in fact $n$ components) arising from translational
limits of $\S \subset \S''$.  Hence, $\Te (\S'') \subset \Te (\S')$
contains a surface $\Delta$ with at least $n+1$ components, which
contradicts the definition of $n$.  This contradiction completes the
proof of item~{\it \ref{oneend}}.

We are now in a position to prove item~{\it \ref{sca}} of the
theorem. The first step in this proof is the following assertion.

\begin{assertion}\label{assca} Suppose $\S\in {\Te}(M)\cup\{M\}$
and every element in $\Te(\S)$ is connected. There exists a function
$f\colon [1,\infty)\to [1,\infty)$ such that for every
$\Omega\in\TS$ and for all points $p,q\in \Omega$ with $1\leq
d_{\Rsmall^3}(p,q)\leq R$, then
$$d_{\Omega}(p,q)\leq f(R) d_{\Rsmall^3}(p,q).$$ Furthermore, if
no element in $\Te(\S)$ has a plane of Alexandrov symmetry, then
$\S$ is chord-arc.
\end{assertion}

\begin{proof} Suppose $\S\in {\Te}(M)\cup\{M\}$ and every surface in $\Te(\S)$
is connected. If there fails  to exist the desired function $f$,
then there exists a positive number $R$, a sequence of surfaces
$\Omega(n)\in \Te(\S)$ and points $p_n,q_n\in \Omega (n)$ such that
for $n\in \N$,  $$1\leq d_{\Rsmall^3}(p_n, q_n)\leq R\;\;\;{\rm
and}\;\;\; n \cdot d_{\Rsmall^3} (p_n, q_n)\leq
d_{\Omega(n)}(p_n,q_n).
$$ Since by hypothesis every surface in $\Te(\S)$ is connected, $\Te(\S)={\cal
T}(\S)$. As ${\cal T}(\S)$ is sequentially compact and
$\TS=\Te(\S)$, the sequence of surfaces $\Omega(n)-p_n\in {\cal
T}(\S)$ can be chosen to converge to a $\S_\infty \in \Te(\S)=\TS$
and  the points $q_n-p_n$ converge to a point $q\in\S_\infty$.
Clearly $\S_\infty$ is not connected because it has a component
passing through $q$ and another component passing through the origin
(the intrinsic distance between $\vec{0}\in \Omega(n)-p_n$ and
$q_n-p_n\in \Omega(n)-p_n$ is at least $n$). But by assumption, every
surface in $\Te(\S)$ is connected. This contradiction proves the
existence of the desired function $f$.

Suppose now that $\TS$ contains no element with a plane of
Alexandrov symmetry and let $f$ be a function satisfying the first
statement in the assertion. Since $\S$ is an end representative of
$\S$ itself, item~{\it \ref{balls}} of the theorem implies that
there exists an $R_0>0$ such that every ball in $\rth$ of radius at
least $R_0$ intersects $\S$ in some point. Let $k$ be a positive
integer greater than $R_{0}+1$. Fix any two points $p,q\in \S$ of
extrinsic distance at least $4k$. Let $v=\frac{q-p}{|q-p|}$, $p_0=p$
and $p_{i+1}=p_i+2kv$, where $i\in\{0,1,\ldots, n-1\} $ and $q\in
\overline{\B}(p_n,k)$. By our choice of $k$, an open ball of radius $k-1$
always intersects $\S$ at some point. For each $0<i<n$, let $s_i\in
\S\cap \B (p_i,k-1)$; we choose $s_0=p$ and $s_n=q$. Since for
each $i<n$, $d_{\Rsmall^3}(s_i, s_{i+1})\leq 4k$ and $d_{\Rsmall^3}(s_i, s_{i+1})\geq 1$, then $d_{\S}(s_i,
s_{i+1})\leq f(4k)4k$. Using the triangle inequality and
$2(n-1)k\leq d_{\Rsmall^3}(p,q)$, we obtain
$$d_{\S}(p,q)\leq \sum_{i=0}^{n-1}d_{\S}(s_i, s_{i+1})\leq
nf(4k)4k\leq 2f(4k)( d_{\Rsmall^3}(p,q)+2k)< 4f(4k)
d_{\Rsmall^3}(p,q).$$ Thus, $\S$ is chord-arc with constant
$4f(4k)$, which completes the proof of the assertion.
\end{proof}

We now return to the proof of item~{\it \ref{sca}}. Let $\S \in \TM$
be a minimal element. By the last statement in item~{\it
\ref{oneend}}, the minimal element $\S$ satisfies $\TS=\Te(\S)$ and
so, every surface in $\Te(\S)$ is connected. Thus, by Assertion~{\it
\ref{assca}}, if $\S$ fails to have a plane $P$ of Alexandrov
symmetry, then $\S$ is chord-arc. Suppose now that $\S$ has a plane
$P$ of Alexandrov symmetry. If $\S$ were to fail to be chord-arc,
then the proof of item~{\it \ref{inf2}} shows that either $\S$ is a
Delaunay surface or else there exists an $R_0>0$ such that every
ball $B$ in $\rth$ of radius $R_0$ and centered at a point of $P$
must intersect $\S$. In the first case, $\S$ is a Delaunay surface,
which is clearly chord-arc. In the second case, the existence of
points in $B\cap \S$ allows one to modify the proof of Assertion
{\it \ref{assca}}  to show that $\S$ is chord-arc. Thus, item~{\it
\ref{sca}} of the theorem is proved.

In order to prove item~{\it \ref{n10}},  we need the following
lemma.

\begin{lemma}\label{lemma}
Let $\S$ be a minimal element in $\TM$. For all $D, \, \ve>0$, there
exists a $d_{\ve,D}>0$ such that the following statement holds. For
any $B_{\S}(p,D)\subset \S$ and for all $q\in \S$, there exists
$q'\in \Sigma$ such that $B_{\S}(q',D)\subset B_\Sigma(q,d_{\ve,D})$
and $d_{\cal H}(B_\Sigma(p,D)-p,\, B_\Sigma(q',D)-q')<\ve.$ Here
$B_\Sigma(p,R)$ denotes the intrinsic ball of radius $R$ centered at
$p$.
\end{lemma}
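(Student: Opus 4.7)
My plan is to argue by contradiction, combining the compactness of $\TM$ from item~\ref{n5} of Theorem~\ref{T}, the characterization of a minimal ${\cal T}$-invariant set $\De$ as one for which ${\cal T}(\widetilde{\S})=\De$ for every $\widetilde{\S}\in\De$ (item~\ref{n7}), and the chord-arc property of minimal elements (item~\ref{sca}), all applied to the minimal element $\S$ together with $\De=\TS$.

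Suppose the lemma fails for some $D,\ve>0$. Then for each $n\in\N$ there exist $p_n,q_n\in\S$ with the property that every $q'\in\S$ satisfying $d_\S(q_n,q')\leq n$ has $d_{\cal H}\bigl(B_\S(p_n,D)-p_n,\,B_\S(q',D)-q'\bigr)\geq \ve$. By item~\ref{n5}, after passing to subsequences the translated pointed surfaces $\S-p_n$ and $\S-q_n$ converge in $C^2$ on compact subsets of $\rth$ to limit surfaces $\S_p,\S_q\in\TS=\De$. Since $\De$ is minimal, item~\ref{n7} applied at $\S_q$ gives $\De={\cal T}(\S_q)\ni \S_p$, so there is a divergent sequence $y_j\in\S_q$ with $\S_q-y_j\to\S_p$ in $C^2$ on compact subsets of $\rth$.

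I then fix $j$ large enough that $d_{\cal H}\bigl(B_{\S_q}(y_j,D)-y_j,\,B_{\S_p}(0,D)\bigr)<\ve/3$, and choose $n$ much larger than $j$. Since $\S-q_n$ approximates $\S_q$ arbitrarily well in $C^2$ on the fixed ball $\overline{\B}(0,|y_j|+D+1)$, I can lift $y_j$ to a point $y_j^n\in\S$ with $y_j^n-q_n$ as close to $y_j$ as desired. By item~\ref{sca}, the chord-arc constant $C$ of $\S$ gives $d_\S(y_j^n,q_n)\leq C(|y_j|+1)$, so for $n$ large enough $y_j^n$ is an admissible candidate for $q'$. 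Because $d_{\rth}\leq d_\S$, each intrinsic $D$-ball lies in the extrinsic $D$-ball with the same center; hence $C^2$-convergence of the ambient surfaces on a fixed extrinsic ball yields Hausdorff convergence of the recentered intrinsic $D$-balls. Applying this to the two convergences $B_\S(p_n,D)-p_n\to B_{\S_p}(0,D)$ and $B_\S(y_j^n,D)-y_j^n\to B_{\S_q}(y_j,D)-y_j$, together with the fixed $\ve/3$-approximation between $B_{\S_q}(y_j,D)-y_j$ and $B_{\S_p}(0,D)$, the triangle inequality produces $d_{\cal H}(B_\S(p_n,D)-p_n,\,B_\S(y_j^n,D)-y_j^n)<\ve$ for $n$ sufficiently large, contradicting the failure hypothesis.

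The main obstacle I expect is the step translating $C^2$-convergence of the ambient surfaces on a fixed extrinsic ball into Hausdorff convergence of the recentered intrinsic $D$-balls. This rests on the uniform second fundamental form bound (hypothesis on $M$, inherited by all limits in $\TM$) and on the containment $B_\S(\cdot,D)\subset\overline{\B}(\cdot,D)$ forced by $d_{\rth}\leq d_\S$, which together ensure that the intrinsic $D$-balls live in a fixed compact region of $\rth$ on which the $C^2$-convergence is uniform.
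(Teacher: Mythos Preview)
Your proof is correct and follows essentially the same contradiction scheme as the paper: take limits $\S_p,\S_q\in\TS$ of the translated surfaces $\S-p_n,\S-q_n$, use minimality (item~\ref{n7}) to force $\S_p\in{\cal T}(\S_q)$, and derive a contradiction via the triangle inequality for Hausdorff distance. The one notable variation is how you guarantee that the lifted point $y_j^n$ lies in $B_\S(q_n,n)$: you invoke the chord-arc property (item~\ref{sca}, already proved at this stage) to bound $d_\S(q_n,y_j^n)$ by a fixed multiple of $|y_j|$, whereas the paper instead appeals to item~\ref{oneend} for connectedness of the limits and uses directly that the exhausting intrinsic balls $B_\S(q_n,n)-q_n$ converge to all of $\S'_\infty$, so any $z\in\S'_\infty$ is automatically approximated by points $z_n\in B_\S(q_n,n)$. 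Both devices are legitimate and the remaining steps coincide.
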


\begin{proof}
Arguing by contradiction, suppose that the claim in the lemma is
false. Then there exist $D, \, \ve>0$ such that the following holds.
For all $n\in \N$, there exist intrinsic balls
$B_\Sigma(p_n,D)\subset \Sigma$ and $q_n\in \Sigma$ such that for
any $B_\Sigma(q',D)\subset B_\Sigma(q_n, n)$, then $d_{\cal
H}(B_\Sigma(p_n,D)-p_n,B_\Sigma(q',D)-q')>\ve.$ In what follows, we
further simplify the notation and we let $B_\Sigma(p)$ denote
$B_\Sigma(p,D)$. After going to a subsequence, we can assume that
the set of translated surfaces, $\Sigma-p_n$, converges $C^2$ to a
complete, strongly Alexandrov embedded, $CMC$ surface
$\Sigma_\infty$ passing through the origin $\vec{0}$. By item~{\it
\ref{oneend}}, $\S_\infty$ is connected and we consider it to be
pointed so that $B_\Sigma(p_n)-p_n$ converges to
$B_{\Sigma_\infty}(\vec{0})$. Also, we can assume that
$B_\Sigma(q_n,n)-q_n$ converges to a complete, connected, pointed,
strongly Alexandrov embedded $CMC$ surface $\Sigma_\infty'$. The
previous discussion implies that for any $z\in \Sigma_\infty'$,
there exists a sequence $B_\Sigma(z_n)\subset B_\Sigma(q_n,n)$, such
that
\begin{equation}\label{eq1}
d_{\cal H}(B_\Sigma(z_n)-z_n,B_{\Sigma'_\infty}(z)-z)<\frac{\ve}{4}
\quad \text{for $n$ large}.
\end{equation}
Furthermore, we can also assume that
\begin{equation}\label{eq12}
d_{\cal
H}(B_\Sigma(p_n)-p_n,B_{\Sigma_\infty}(\vec{0}))<\frac{\ve}{4},
\end{equation} and since $B_\Sigma(z_n)\subset B_\Sigma(q_n,n)$,
then
\begin{equation}\label{eq2}d_{\cal H}(B_\Sigma(p_n)-p_n,B_\Sigma(z_n)-z_n)>\ve.\end{equation}

Recall that since $\Sigma$ is a minimal element, item~{\it \ref{n7}}
in Theorem \ref{T} implies that
$$\Sigma,\;\Sigma_\infty,\;\Sigma'_\infty\in {\cal T}(\Sigma)={\cal
T}(\Sigma_\infty)={\cal T}(\Sigma'_\infty).$$ In order to obtain a
contradiction it suffices to show that there exists an $\alpha>0$ such
that
$$d_{\cal
H}(B_{\Sigma'_\infty}(z)-z,B_{\Sigma_\infty}(\vec{0}))>\alpha$$ for
any $z\in\Sigma'_\infty$ because this inequality clearly implies
that $\Sigma_\infty \notin{\cal T}(\Sigma'_\infty)$. Fix
$z\in\Sigma'_\infty$ and let $z_n$ and $p_n$ be as given by
equations \eqref{eq1} and \eqref{eq12}.

In what follows, we are going to start with equation \eqref{eq2},
apply the triangle inequality for the Hausdorff distance between
compact sets, then apply the triangle inequality and equation
\eqref{eq1}, and finally we apply \eqref{eq12}. For $n$ large,
\begin{multline*}
\ve<d_{\cal H}(B_\Sigma(p_n)-p_n, B_\Sigma(z_n)-z_n)\leq \\
\leq d_{\cal H}(B_\Sigma(p_n)-p_n,B_{\Sigma'_\infty}(z)-z)+d_{\cal
H}(B_{\Sigma'_\infty}(z)-z,B_\Sigma(z_n)-z_n)<\\ <d_{\cal
H}(B_\Sigma(p_n)-p_n,B_{\Sigma_\infty}(\vec{0}))+d_{\cal
H}(B_{\Sigma_\infty}(\vec{0}),B_{\Sigma'_\infty}(z)-z)+\frac{\ve}{4}< \qquad\\
\qquad\qquad\qquad<\frac{\ve}{2}+d_{\cal
H}(B_{\Sigma'_\infty}(z)-z, B_{\Sigma_\infty}(\vec{0})).
\qquad\qquad\qquad\qquad\qquad\qquad
\end{multline*}

This inequality implies $d_{\cal
H}(B_{\Sigma'_\infty}(z)-z,B_{\Sigma_\infty}(\vec{0}))>\frac{\ve}{2}$,
which, after setting $\a=\frac{\ve}{2}$, completes the proof of the lemma.\end{proof}

Notice that if $X\subset \Sigma$ is a compact domain of intrinsic
diameter less than $D$, then for a point $p\in\Sigma$,  $X\subset
B_\Sigma(p,2D)$. The next lemma is a consequence of Lemma
\ref{lemma} and the following observation regarding the Hausdorff
distance: Given three compact sets $A,B,X\subset \Sigma$ with
$X\subset A$, if $d_{\cal H}(A,B)<\ve$, then there exists $X'\subset
B$ such that $d_{\cal H}(X,X')<\ve.$

\begin{lemma}
Let $\S$ be a minimal element of ${\cal T}(M)$. For all $D, \,
\ve>0$, there exists a $d_{\ve,D}>0$ such that the following
statement holds. For every smooth, connected compact domain $X\subset
\S$ with intrinsic diameter less than $D$ and for each $q\in \S$,
there exists a smooth compact, connected domain $X_{q,\ve}\subset
\S$ and a translation, $i\colon \rth \to \rth$, such that
$$d_{\S}(q,X_{q,\ve})<d_{\ve,D}\;\;\; \mbox{and} \;\;\; d_{\cal H}(X,
i(X_{q,\ve}))<\ve,$$ where $d_{\S}$ is distance function on $\S$.
\end{lemma}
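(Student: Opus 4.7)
The plan is to reduce the statement directly to Lemma~\ref{lemma} using the two observations highlighted in the paragraph immediately before the statement. Given a smooth, connected, compact domain $X\subset\S$ of intrinsic diameter less than $D$, fix any base point $p\in X$. Connectedness of $X$ together with the diameter bound gives $X\subset B_\S(p,2D)$, which is the first hint.

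Now apply Lemma~\ref{lemma} with $D$ replaced by $2D$ (keeping the same $\ve$). This produces a constant $d_{\ve,2D}>0$ with the following property: for the given point $q\in\S$, there exists $q'\in\S$ satisfying $B_\S(q',2D)\subset B_\S(q,d_{\ve,2D})$ and
$$d_{\cal H}\bigl(B_\S(p,2D)-p,\; B_\S(q',2D)-q'\bigr)<\ve.$$
Translating both compact sets by $+p$ turns this into
$$d_{\cal H}\bigl(B_\S(p,2D),\; B_\S(q',2D)+(p-q')\bigr)<\ve.$$
I now invoke the elementary Hausdorff observation stated in the preamble, applied with $A=B_\S(p,2D)$, $B=B_\S(q',2D)+(p-q')$ and with $X\subset A$: it yields a compact subset $X^{*}\subset B$ such that $d_{\cal H}(X,X^{*})<\ve$. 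Define the translation $i\colon\rth\to\rth$ by $i(y):=y+(p-q')$ and set
$$X_{q,\ve}\;:=\;i^{-1}(X^{*})\;=\;X^{*}+(q'-p)\;\subset\;B_\S(q',2D)\;\subset\;\S.$$
Then $d_{\cal H}(X,i(X_{q,\ve}))=d_{\cal H}(X,X^{*})<\ve$, and since $X_{q,\ve}\subset B_\S(q',2D)\subset B_\S(q,d_{\ve,2D})$, we also have $d_\S(q,X_{q,\ve})<d_{\ve,2D}$. Setting $d_{\ve,D}:=d_{\ve,2D}$ delivers the two required inequalities.

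The only point going beyond a purely set-theoretic Hausdorff argument is the requirement that $X_{q,\ve}$ be a \emph{smooth, connected, compact domain}, not just a compact set. This is the main---and essentially the only---obstacle, but it is handled by inspecting the proof of Lemma~\ref{lemma}: the closeness of $B_\S(p,2D)-p$ to $B_\S(q',2D)-q'$ in Hausdorff distance is realized via $C^{2}$-closeness of the underlying $CMC$ surfaces (the very convergence that defines the metric $d_{\TM}$ in item~{\it \ref{n5}} of Theorem~\ref{T}). For $q'$ suitably far out along an appropriate subsequence, this gives a genuine normal-graph diffeomorphism between a neighborhood of $B_\S(p,2D)-p$ in $\S-p$ and a neighborhood of $B_\S(q',2D)-q'$ in $\S-q'$. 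The image of $X-p$ under this diffeomorphism is then a smooth connected compact subdomain of $B_\S(q',2D)-q'$, whose translation back by $q'-p$ yields $X_{q,\ve}$ with the required smoothness and connectedness while preserving the Hausdorff estimate up to an arbitrarily small error (which can be absorbed by shrinking $\ve$ at the outset).
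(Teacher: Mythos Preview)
Your proposal is correct and follows exactly the route the paper indicates: the paper does not give a standalone proof of this lemma but simply states that it is a consequence of Lemma~\ref{lemma} together with the two observations (that $X\subset B_\S(p,2D)$ and the Hausdorff-subset fact), which is precisely what you carry out. Your final paragraph, which addresses why $X_{q,\ve}$ can be taken to be a smooth connected domain via the underlying $C^2$-convergence, fills in a detail that the paper leaves implicit.
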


In order to finish the proof of item~{\it \ref{n10}}, we remark that
 item~{\it \ref{sca}} implies intrinsic and extrinsic distances are
comparable when the intrinsic distance between the points is at
least one. Thus, the above lemma implies the first statement in
 item~{\it \ref{n10}}. The second statement is an immediate
consequence of the first statement, which completes the proof.

Theorem \ref{sp2} is now proved.
{\hfill\penalty10000\raisebox{-.09em}{$\Box$}\par\medskip}

\vspace{.2cm}

{\it Proof of Corollary \ref{cor2}.} We first prove item~{\it 1}\,
of the corollary. By equation \eqref{eq5}, $A_{\sup}(M,3)=0$ implies
$G_{\sup}(M,3)=0$. On the other hand, if $G_{\sup}(M,3)=0$, then for
any $\Sigma\in{\cal T}(M)$, $G_{\sup}(\Sigma,3)=0$. In particular,
for any minimal element $\S\in \TM$, $G_{\inf}(\Sigma,3)=0$. By
item~{\it \ref{inf3}}\, of Theorem \ref{sp2}, ${\cal T}(\Sigma)$
contains a minimal element $\Sigma'$ with a plane of Alexandrov
symmetry. Since $\Sigma$ is a minimal element, $\Sigma\in {\cal
T}(\Sigma')$ and therefore has a plane of Alexandrov symmetry. This
proves that item~{\it 1}\, holds.

The proof of item~{\it 2}\, follows from arguments similar to the
ones in the proof of item~{\it 1}, using item~{\it \ref{inf2}}\, of
Theorem \ref{sp2} instead of item~{\it \ref{inf3}}.
{\hfill\penalty10000\raisebox{-.09em}{$\Box$}\par\medskip}

\begin{remark}{\rm In regards to item {\it 4} of Theorem \ref{sp2}, it has been conjectured by Meeks \cite{me17} that if  $M$ is a properly embedded $CMC$ surface in $\rth$ which lies in the halfspace $\{x_3\geq 0\}$, then it has a horizontal plane of Alexandrov symmetry. This conjecture holds when $M$ has finite topology \cite{kks1} }\end{remark}

\begin{remark}\label{rm1}
{\rm In $\cite{mt5}$, we give a natural generalization of
Theorem~\ref{T} and~\ref{sp2} to the more general case of separating
$CMC$ hypersurfaces $M$ with bounded second fundamental form in an
$n$-dimensional noncompact homogeneous manifold $N$. In that paper,
we obtain some interesting applications of this generalization to
the classical setting where $N$ is $\mathbb{R}^n$ or hyperbolic
$n$-space, $\mathbb{H}^n$, which are similar to the applications
given in Theorem~\ref{sp2}.}
\end{remark}

\begin{remark}\label{rm3}
{\rm In~\cite{mt2}, we prove that if $M\subset \rth$ is a strongly
Alexandrov embedded $CMC$ surface with bounded second fundamental
form and $\TM$ contains a Delaunay surface, then $M$ is rigid. In
~\cite{smyt1}, Smyth and Tinaglia show that if $M$  contains a
surface with a plane of Alexandrov symmetry, then $M$ is locally
rigid\footnote{$M$ is {\it locally rigid} if any one-parameter
family of isometric immersions $M_t$ of $M$, $t\in [0,\ve)$,
$M_0=M$, with same mean curvature as $M$ is obtained by a family of
rigid motions of $M$.}. In relation to these rigidity results note
that Theorem \ref{sp2} gives several different constraints on the
geometry or the topology of $M$ that guarantee the existence of a
Delaunay surface or a surface with a plane of Alexandrov symmetry in
$\TM$. The first author conjectures that {\it the helicoid is the
only complete, embedded, constant mean curvature surface in $\rth$
which admits more than one noncongruent, isometric, constant mean
curvature immersion into $\rth$ with the same constant mean
curvature}. Since intrinsic isometries of the helicoid extend to
ambient isometries, this conjecture would imply that {\it an
intrinsic isometry of a complete, embedded, constant mean curvature
surface in $\rth$ extends to an ambient isometry of $\rth$}.}
\end{remark}

\section{Embedded $CMC$ surfaces with  a plane of Alexandrov
symmetry and more than one end.}
\label{sc4}

In this section we prove the following topological result that uses techniques from the proof of Theorem \ref{sp2}. In the next theorem the hypothesis that the surface $M$ be embedded can be replaced by the weaker condition that it is embedded in the complement of its Alexandrov plane of symmetry.

\begin{theorem}\label{special} Suppose $M$ is a not necessarily
connected, complete embedded $CMC$ surface with bounded second
fundamental form, possibly empty compact boundary, a plane of
Alexandrov symmetry, at least $n$ ends and every component of $M$ is
noncompact. If $n$ is at least two, then $M$ has at least $n$
annular ends. Furthermore, if $M$ has empty boundary and more than
one component, then each component of $M$ is a Delaunay surface.
\end{theorem}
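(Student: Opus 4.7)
Normalize so that the plane of Alexandrov symmetry is $P=\{x_3=0\}$. By item~\ref{n3} of Theorem~\ref{T} applied componentwise, every component of $M$ lies in a horizontal slab of width at most $2/H$; in particular the components fixed by reflection across $P$ lie in $\{|x_3|\le 1/H\}$, and $M^+=M\cap\{x_3\ge 0\}$ is graphical over a planar domain $\Omega\subset P$, meeting $P$ orthogonally along $M\cap P$. Slab containment together with item~\ref{n2} of Theorem~\ref{T} yields quadratic area growth, so $A_{\sup}(M,3)=0$.

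For the first statement, pick disjoint end representatives $E_1,\ldots,E_n$ for $n$ of the ends of $M$, and let $\Omega_i=\pi(E_i^+)\subset\Omega$ be their horizontal projections; outside a large compact subset of $P$ these are disjoint, so the planar complement $P\setminus\bigcup_j\overline{\Omega_j}$ contains at least $n$ distinct unbounded components separating them. This forces each $\Omega_i$ to be bounded along two noncollinear horizontal directions $v_i,w_i\in P$. I would then run the Alexandrov moving-plane method with vertical planes perpendicular to $v_i$ and then to $w_i$, sliding inward from infinity toward $E_i$: the graphical structure of $M^+$ over $\Omega$ together with the slab containment keeps the reflected piece on the mean-convex side of $M$ throughout the slide until the first touching, at which point the Hopf maximum principle confines $E_i$ between two parallel vertical planes perpendicular to $v_i$; repeating with $w_i$ produces a solid cylinder with axis in $P$ containing $E_i$. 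Meeks's solid-cylinder theorem~\cite{me17} combined with~\cite{kks1} then forces $E_i$ to be an annular end asymptotic to a translate of a Delaunay surface.

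For the furthermore statement, assume $\partial M=\emptyset$ and that $M$ has at least two components. I would first observe that every component $C$ of $M$ must intersect $P$: otherwise $C$ would be a complete CMC graph in a horizontal slab with empty boundary, and since there is no entire CMC graph over $\R^2$ the projection $\pi(C)$ has nonempty boundary, at which $C$ would develop vertical tangents, contradicting the graph structure of $M^+$. So each component $C$ is itself Alexandrov-symmetric with plane $P$. Now apply the moving-plane method to $C$ alone with vertical planes perpendicular to a horizontal direction $v$: either the reflected surface first touches $C$ itself, yielding a vertical plane of symmetry $Q$ for $C$---whence $C$ has two orthogonal planes of symmetry and is a surface of revolution about the line $P\cap Q$, hence a Delaunay surface---or the first touching is with another component $C'$, in which case the Hopf maximum principle forces $R_Q(C)=C'$. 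Since the circle of horizontal directions $v\in S^1\subset P$ is uncountable while $M$ has only countably many components, some direction must produce the vertical symmetry plane for $C$, so $C$ is Delaunay.

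The main obstacle lies in the cylindrical-containment step of the first statement: the Alexandrov reflection in vertical planes must be carried out for ends of a priori unknown (and possibly infinite) topology, and the combination of the graphical structure of $M^+$ over $\Omega$ with the horizontal slab containment is precisely what keeps the reflected piece on the mean-convex side throughout the slide. Once cylindrical containment is established, the results of~\cite{me17} and~\cite{kks1} promptly deliver the Delaunay asymptotics and hence the annular structure.
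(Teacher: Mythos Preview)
Your approach diverges substantially from the paper's, and the central step---the moving-plane argument meant to confine each end $E_i$ in a solid cylinder---has a genuine gap. From ``$\Omega_i$ is bounded in direction $v_i$'' you want to conclude ``$E_i$ lies between two parallel vertical planes perpendicular to $v_i$,'' and for that you need the moving-plane process to terminate at an actual plane of reflective symmetry for $E_i$. But $E_i$ has compact boundary: once the sliding plane crosses $\partial E_i$, the reflected boundary curve lies on the wrong side and the comparison breaks down, so there is no reason for the process to halt at a symmetry plane before that happens. Worse, the first touching of the reflected piece may be against a \emph{different} end $E_j$ (or another component of $M$), and then the Hopf lemma says nothing about $E_i$. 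Even if these issues were handled, boundedness in two noncollinear directions yields only containment in a wedge, not a strip. The paper confronts exactly this obstacle and resolves it by a different mechanism: after showing an end representative $M_1$ lies in a half-space, it slices $M_1$ by carefully chosen convex prisms $T(n)\times\R$ and uses the Korevaar--Kusner ``blowing a bubble'' construction together with a flux (divergence-theorem) identity from~\cite{kks1} to prove that every boundary curve of $M_1(n)$ over the segment $a(n)$ bounds a \emph{compact} piece of $M_1$. This force-balancing step is the heart of the proof; it is what substitutes for the unavailable moving-plane symmetry and produces the cylindrical containment needed to invoke~\cite{kks1}.

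Your counting argument for the ``furthermore'' statement is also incomplete. Even granting that for each horizontal direction $v$ the moving plane first touches either $C$ or some other component $C'_v$, the assignment $v\mapsto C'_v$ need not be injective: an entire arc of directions can reflect $C$ onto the \emph{same} component $C'$ through different planes $Q_v$, so uncountability of $S^1$ versus countability of components does not by itself force a direction of self-touching. The paper instead reruns the flux/bubble argument for a single component to show it is cylindrically bounded, hence Delaunay by~\cite{kks1}.
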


The following corollary is an immediate consequence of the above
theorem and the result of Meeks~\cite{me17} that a connected,
noncompact, properly embedded $CMC$ surface with one end must have
infinite genus.

\begin{corollary} \label{corfinite} Suppose  $M$ is a connected,
noncompact, complete embedded $CMC$ surface with bounded second
fundamental form and a plane of Alexandrov symmetry. Then $M$ has
finite topology if and only if $M$ has a finite number of ends
greater than one.
\end{corollary}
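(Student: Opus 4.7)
The plan is to prove the two implications separately, treating each as a direct deduction from Theorem~\ref{special} together with the cited result of Meeks that a connected, noncompact, properly embedded $CMC$ surface in $\rth$ with exactly one end must have infinite genus.

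For the forward direction, I would start from the definition of finite topology: $M$ is homeomorphic to a closed surface with finitely many punctures, so in particular $M$ has finite genus and a finite number of ends. It remains only to exclude the possibility that $M$ has exactly one end. If $M$ had a single end, Meeks' result~\cite{me17} would force $M$ to have infinite genus, contradicting the finite topology hypothesis. Therefore $M$ has a finite number of ends, at least two.

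For the reverse direction, suppose $M$ has exactly $k$ ends with $k \geq 2$. Since $M$ is connected and noncompact, the hypothesis of Theorem~\ref{special} that every component of $M$ is noncompact is satisfied trivially, and all the other hypotheses of that theorem are immediate from those of the corollary. Applying Theorem~\ref{special} with $n = k$ gives that $M$ has at least $k$ annular ends; since $M$ has only $k$ ends in total, every end of $M$ is annular. Thus, outside a sufficiently large compact subset, $M$ is a disjoint union of $k$ topological annuli $S^1 \times [0,\infty)$, so $M$ is homeomorphic to a compact surface with boundary capped off by $k$ punctured disks, i.e., to a closed surface minus $k$ points. This is precisely the definition of finite topology.

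There is no substantive obstacle here: all of the genuine work has already been done in proving Theorem~\ref{special} and in Meeks' one-ended result. The only care needed is in correctly invoking Theorem~\ref{special} (verifying the connectedness-noncompactness hypothesis and extracting the conclusion that all $k$ ends are annular) and in recognizing that an end of an embedded surface whose representative is topologically $S^1\times [0,\infty)$ contributes no genus and may be capped off to yield finite topology.
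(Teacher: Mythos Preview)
Your proof is correct and follows exactly the approach indicated by the paper, which simply states that the corollary is an immediate consequence of Theorem~\ref{special} together with Meeks' result~\cite{me17} that a connected, noncompact, properly embedded $CMC$ surface with one end has infinite genus. You have accurately fleshed out both implications, correctly verifying the hypotheses of Theorem~\ref{special} and drawing the conclusion that all $k$ ends are annular, which forces finite topology.
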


In regards to Theorem \ref{special} when $n=\infty$, we note that
there exist connected surfaces of genus zero satisfying the
hypotheses of the theorem which are singly-periodic and have an
infinite number of annular ends. It is important to notice that the hypothesis in Theorem \ref{special} that $M$ has bounded second fundamental form is essential; otherwise, there are counterexamples (see Remark \ref{r4.7}).

\vspace{.2cm}

\begin{proof}
We first describe some of the notation that we will use in the proof
of the theorem.  We will assume that $M$ has a plane $P$ of
Alexandrov symmetry and $P$ is the $(x_1,x_2)$-plane.  We let
$\esf^1(R)=\partial (P\cap\B(R))$. Assume that $M$ is a bigraph over
a domain $\Delta \subset P$ and $R_0$ is chosen sufficiently large,
so that $\partial M \subset \B (R_0)$ and $\Delta -\B(R_0)$ contains
$n$ noncompact components $\Delta_1, \Delta_2, \ldots, \Delta_n$.
Let $M_1,\, M_2\subset M$ denote the bigraphs with boundary over the
respective regions $\Delta_1, \,\Delta_2$. Let $X$ be the component
in $P-(\De_1 \cup \De_2)$ with exactly two boundary curves
$\partial_1, \,\partial_2$, each a proper noncompact curve in $P$
and such that $\partial_1 \subset
\partial \De_1$, $\partial_2\subset \partial \De_2$. The curve
$\partial_1$ separates $P$ into two closed, noncompact,
simply-connected domains $P_1,\, P_2$, where $\De_1\subset P_1$ and
$\De_2\subset P_2$.

Now choose an increasing unbounded sequence of numbers
$\{R_n\}_{n\in \Nsmall}$ with $R_1>R_0$ chosen large enough so that
for $i=1,2,$ there exists a unique component of $P_i\cap \B (R_1)$
which intersects $P_i \cap \esf^1 (R_0)$ and so has $P_i \cap
\esf^1(R_0)$ in its boundary; we will also assume that the circles
$\esf^1 (R_n)$ are transverse to $\partial \De_1 \cup
\partial \De_2$ for each $n$. By elementary separation properties,
for $i=1,2$, there exists a unique component $\s_i(n)$ of $P_i \cap
\esf^1(R_n)$ which separates $P_i$ into two components, exactly one
of whose closure is a compact disk $P_i(n)$ with $P_i \cap
\esf^1(R_0)$ in its boundary; note that the collection of domains
$\{P_i(n)\}_n$ forms a compact exhaustion of $P_i$. See Figure
\ref{fig1}.

%%%%%%%%%%%%%%%%%%%%%%%%%%%%%%%
\begin{figure}[h!] \begin{center}
\includegraphics[width=3in]{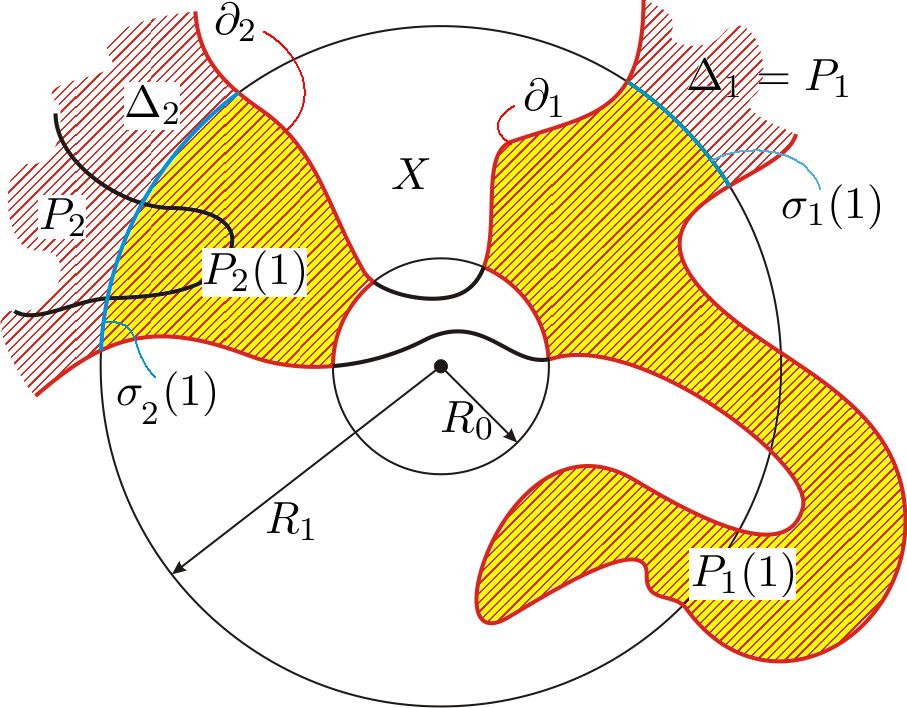} \end{center}
\caption{$P_1(1)$ is the yellow shaded region containing
$\sigma_1(1)$ and an arc of $\partial_1$ in its boundary. This
figure illustrates the possibility that $\Delta_1$ may equal $P_1$
while $\Delta_2$ may be strictly contained in $P_2$.} \label{fig1}
\end{figure}
%%%%%%%%%%%%%%%%%%%%%%%%%%%%%%%

Since $\s_1(n)$ is disjoint from $\s_2(n)$ and each of these sets is
a connected arc in $\esf^1(R_n)$, then, after possibly replacing the
sequence $\{ R_n\}_{n\in \Nsmall}$ by a subsequence and possibly
reindexing $P_1,P_2$, for each $n\in \N$, the arc $\s_1(n)$ is
contained in a closed halfspace $K_n$ of $\rth$ with
boundary plane $\partial K_n$ being a vertical plane passing through
the origin $\vec{0}$ of $\rth$. Let $\De_1(n)=\De_1\cap P_1 (n)$ and
let $\overline{M}_1(n)\subset M_1$ be the compact bigraph over $\De_1(n)$. Let
$\h{K}_n$ be the closed halfspace in $\rth$ with $K_n\subset
\h{K}_n$ and such that the boundary plane $\partial \h{K}_n$ is a
distance $\frac{2}{H}+ R_0$ from $\partial K_n$, where $H$ is the
mean curvature of $M$. Note that $\partial \overline{M}_1(n)$ is contained in
the union of the solid cylinder over $\overline{\B}(R_0)$ and the
halfspace $K_n$. Thus, the distance from $\partial \overline{M}(n)$ to
$\partial \h{K}_n$ is at least $\frac{2}{H}$. By the Alexandrov
reflection principle and the $\frac{1}{H}$ height estimate for $CMC$
graphs with zero boundary values and constant mean curvature $H$, we
find that $\overline{M}_1(n)\subset \h{K}_n$. After choosing a subsequence, the
halfspaces $\h{K}_n$ converge on compact sets of $\rth$ to a closed
halfspace $K$. Since for all $n\in\N$, $\overline{M}_1(n) \subset \overline{M}_1(n+1)$ and
$\bigcup_{n=1}^{\infty}\overline{M}_1(n) =M_1$, one finds that $M_1\subset K$.
After a translation in the $(x_1,x_2)$-plane and a rotation of $M_1$
around the $x_3$-axis, we may assume that the new surface, which we
will also denote by $M_1$, lies in $\{(x_1,x_2, x_3)\mid x_2>0 \}$
and it is a bigraph over a region $\De_1 \subset \{(x_1,x_2,0)\mid
x_2>0\}.$ A straightforward application of the Alexandrov reflection
principle and height estimates for $CMC$ graphs shows that, after an
additional translation in the $(x_1,x_2)$-plane and a rotation
around the $x_3$-axis, $\De_1$ also can be assumed to contain a
divergent sequence of points $p_n=(x_1(n), x_2(n), 0)\in
\partial \De_1$ such that $\frac{x_2(n)}{x_1(n)}\to 0$ as $n$
approaches infinity. See Figure \ref{fig2}.

%%%%%%%%%%%%%%%%%%%%%%%%%%
\begin{figure}[h!] \begin{center}
\includegraphics[width=6in]{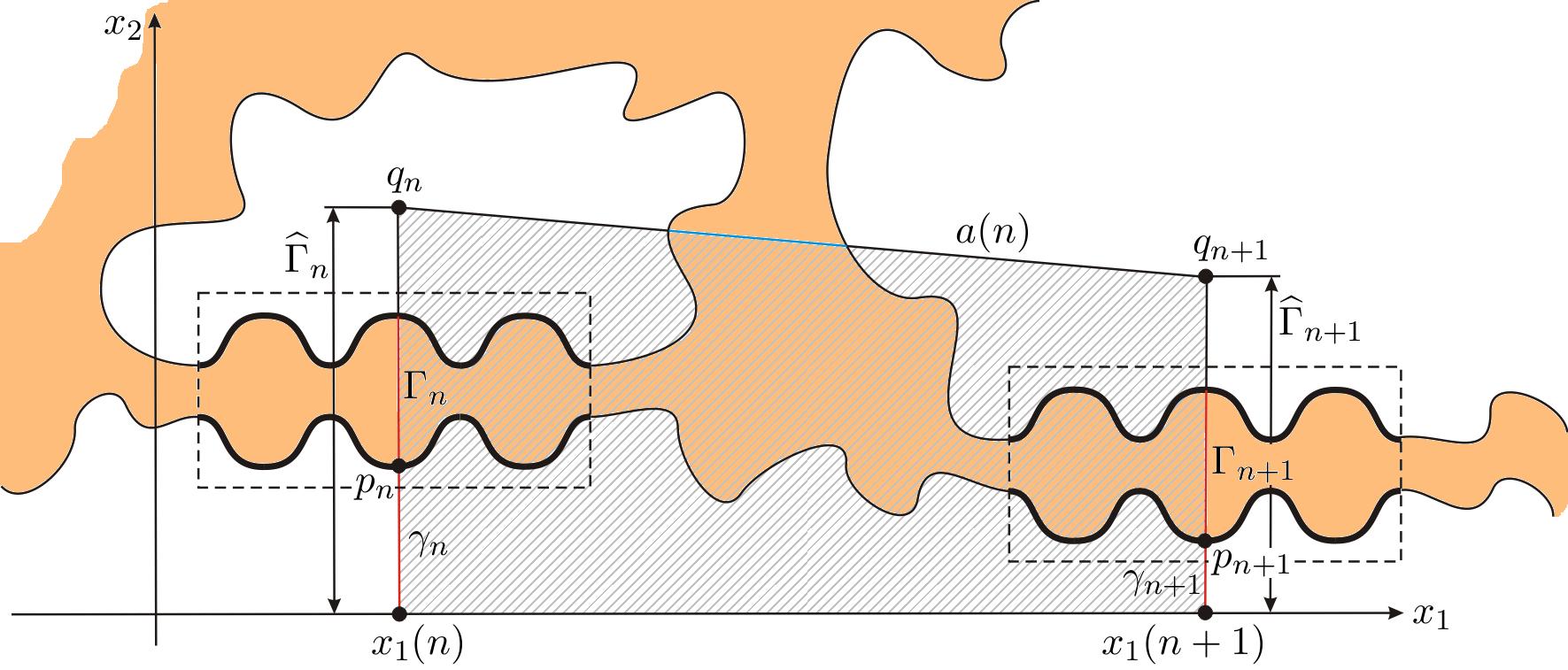} \end{center}
\caption{Choosing the points $p_n$ and related data. The
shaded trapezoidal region is $T(n)$.} \label{fig2}
\end{figure}
%%%%%%%%%%%%%%%%%%%%%%%%%%%

\begin{assertion}
\label{assertionD} The points $p_n$ can be chosen  to satisfy the
following additional properties:
\begin{enumerate}
\item The vertical line segments $\g_n$ joining $p_n$ to $(x_1(n),0,0)$
intersect $\De_1$ only at $p_n$ and $\frac{x_1(n+1)}{x_1(n)}>n$.
\item The surfaces $M_1-p_n$ converge to a surface in $\Te(M)$ with a
related component in ${\cal T}(M)$ being a Delaunay surface $F$ with $P$
as a plane of Alexandrov symmetry and axis parallel to the $x_1$-axis.

\end{enumerate}
\end{assertion}
\begin{proof} The proof that the points $p_n$ can be chosen to satisfy
statement 1 is clear. To prove that they can also be chosen to
satisfy statement 2 can be seen as follows. Let $S_n \subset P$ be
the circle passing through the points $p_n$ and
$(\frac{x_1(n)}{10},0,0)$ with center on the line $\{(x_1(n),s,0)
\mid s <x_2(n)\}$ and let $E_n$ denote the closed disk with boundary
$S_n$. Consider the family of translated disks $E_n(t)=E_n -(0,t,0)$
and let $t_0$ be the largest $t\geq 0$ such that $E_n(t)$ intersects
$\Delta_1$ at some point and let $D_n=E_n(t_0)$.  By construction
and after possibly replacing by a subsequence, points in $D_n \cap
\Delta_1$ satisfy the first statement in the assertion as well as
the previous property that the ratio of their $x_2$-coordinates to
their $x_1$-coordinates limit to zero as $n\to \infty$. Next replace
the previous point $p_n$ by any point of $\partial D_n \cap M_1$, to
obtain a new sequence of points which we also denote by $p_n$. A
subsequence of certain compact regions of the translated surfaces
$M-p_n$ converges to a strongly Alexandrov embedded surface $\Mint
\in \TM$ which has $P$ as a plane of Alexandrov symmetry and which
lies in the halfspace $x_2\geq 0$. It follows from item~{\it
\ref{half}} of Theorem \ref{sp2} (and its proof) that $\TS$ contains
a Delaunay surface $D$ with  axis being a bounded distance from the
$x_1$-axis and which arises from a limit of translates of
$M_\infty$. It is now clear how to choose the desired points
described in the assertion, which again we denote by $p_n$, so that
certain compact regions of the translated surfaces $M-p_n$ converge
to the desired Delaunay surface $F$. This completes the proof of the
assertion.
\end{proof}

As a reference for the discussion which follows, we refer the reader
to Figure \ref{fig2}. By Assertion \ref{assertionD}, we may assume
that around each point $p_n$, the surface $M_1$ is closely
approximated by a translation of a fixed large compact region of the
Delaunay surface $F$. Without loss of generality, we may assume that
the entire line containing $\g_n$ is disjoint from the
self-intersection set of $\partial \Delta_1$.  Let $\G_n$ be the
largest compact extension of $\g_n$ so that $\G_n-\g_n\subset \De_1$
and let $\h{\G}_n$ be a line segment extension of $\G_n$ near the
end point of $\G_n$ with positive $x_2$-coordinate so that $\h{\G}_n
\cap \De_1=\G_n \cap \De_1$ and so that the length of
$\h{\G}_n-\G_n$ is less than $\frac{1}{n}$. Let $q_n$ denote the end
point of $\widehat{\G}_n$ which is different from the point $p_n$.
Without loss of generality, we may assume that the line segments
$a(n)$ in $P$ joining  $q_n, q_{n+1}$ are transverse to $\partial
\De_1$ and intersect $\De_1$ in a finite collection of compact
intervals. If we denote by $v(n)$ the upward pointing unit vector in
the $(x_1,x_2)$-plane perpendicular to $a(n)$, then the vectors $v(n)$ converge
to $(0,1,0)$ as $n$ goes to infinity.

As a reference for the discussion which follows, we refer the reader
to Figure \ref{fig3}. Now fix some large $n$ and consider the
compact region $T(n)\subset P$ bounded by the line segments
$\h{\G}_n$, $\h{\G}_{n+1}$, $a(n)$ and  the line segment joining
$(x_1(n),0,0)$ to $(x_1(n+1),0,0)$. Consider $T(n)$ to lie in $\R^2$
and let $T(n)\times \R\subset \rth$ be the related convex domain in
$\rth$. Let $M_1(n)$ be the component of $M_1\cap (T(n)\times \R)$
which contains the point $p_n$. Note that $M_1(n)$ is compact with
boundary consisting of an almost circle $C(\G_n)$ which is a bigraph
over an arc in $\G_n$, possibly also an almost circle $C(\G_{n+1})$
which is a bigraph over an interval in $\G_{n+1}$ and a collection
of bigraph components over a collection of intervals $I_n$ in the
line segment $a(n)$.

We denote by $\a(n)$  the collection of boundary curves of $M_1(n)$.
Let $\a_2(n)$ be the subcollection of curves in $\a(n)$ which
intersect either $\G_n$ or $\G_{n+1}$, that is, $\a_2(n)=\{C(\G_n),
C(\G_{n+1})\}$ or $\a_2(n)=\{C(\G_n)\}$. Clearly, the collection of
boundary curves of $M_1(n)$ which are bigraphs over the collection
of intervals $I_n=\Delta_1 \cap a(n)$ is $\a(n)-\a_2(n)$. Let
$\a_3(n)$ be the subcollection of curves in $\a(n)-\a_2(n)$ which
bound a compact domain $\De (\a)\subset M_1-\partial M_1$, and let
$\a_4(n)=\a(n)-(\a_2(n)\cup \a_3(n))$. Note that in Figure \ref{fig3},
$\a_2(n)=\{C(\G_n), C(\G_{n+1})\}$, $\a_3(n)$ is empty and $\a_4(n)$
consists of the single blue curve $\partial$.

\begin{assertion}\label{limit} For $n$ sufficiently large,
every boundary curve $\partial$ of $M_1(n)$ which is a graph over an
interval in $I_n$, bounds a compact domain $\De(\partial)\subset
M_1-\partial M_1$; in other words, $\a_4(n)$ is empty.
\end{assertion}

\begin{proof}  For any $\a\in \a(n)$, let $\eta_\a$ denote
the outward pointing conormal to $\a\subset \partial M_1 (n)$ and
let $D(\alpha)$ be the planar disk bounded by $\a$. Consider a
boundary component $\partial\in \a_4(n)$. By the ``blowing a
bubble'' argument presented in ~\cite{kk2}, there exists another
disk $\h{D}(\partial)$ on the mean convex side of $M_1$ of the same
constant mean curvature as $M_1$, $\partial \h{D}(\partial)=\partial
D(\partial)$. Moreover, $\h{D}(\partial)$ is a graph over
$D(\partial)$ and $\h{D}(\partial)\cap (T(n)\times\R)=\partial
\h{D}(\partial)=\partial$. Let $\widehat{\eta}_{\partial}$ denote
the inward pointing conormal to $\partial\h{D}(\partial)$. The
graphical disk $\h{D}(\partial)$ is constructed so that $\langle
\eta_{\partial}-\widehat{\eta}_{\partial}, v(n)\rangle\geq0$, see
Figure \ref{fig3}.

%%%%%%%%%%%%%%%%%%%%%%%%%%%%%%%%%
\begin{figure}[h!]\begin{center}
\includegraphics[width=6in]{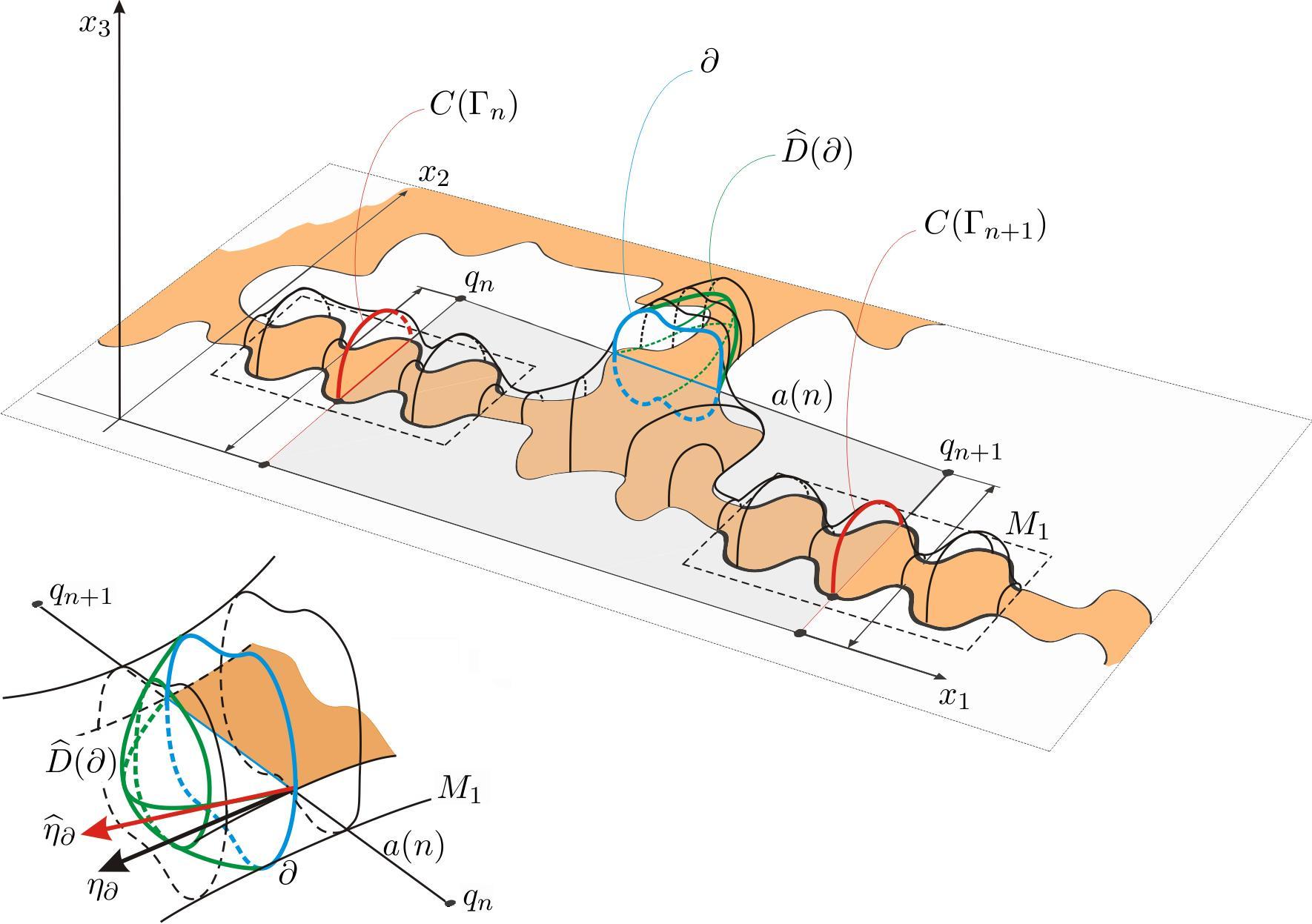} \end{center}
\caption{Blowing a bubble $\h{D}(\partial)$ on the mean convex side
of $M_1$.}\label{fig3}
\end{figure}
%%%%%%%%%%%%%%%%%%%%%%%%%%%%%%

The piecewise smooth surface
$M_1(n)\cup(\bigcup_{\a\in\a_2(n)\cup\a_3(n)}D(\a))\cup
(\bigcup_{\a\in\a_4(n)}\h{D}(\a))$ is the boundary of a compact
region $W(n)\subset \rth$. An application of the
divergence theorem given in~\cite{kks1} to the vector field $v(n)$,
considered to be a constant vector field in $\rth$ in the region
$W(n)$, gives rise to the following equation:

\begin{multline} \label{eq7}
\sum_{\a\in \a_2(n)\cup\a_3(n)}\left[\int_{\a}\langle \eta_\a,
v(n)\rangle-2H\int_{D(\a)}\langle v(n), N(n)
\rangle\right]+\\+ \sum_{\partial\in \a_4(n)}\left[\int_{\partial}\langle
\eta_\partial, v(n)\rangle-2H\int_{\h{D}(\partial)}\langle v(n), N(n)
\rangle\right]=0,
\end{multline}
where $H$ is the mean curvature of $M$ and $N(n)$ is the outward
pointing conormal to $W(n)$.  Note that $\sum_{\a\in
\a_2(n)}\left[\int_{\a}\langle\eta_{\a}, v \rangle - 2H \int_{D(\a)}
\langle v(n), N \rangle \right] =\ve(n)$  converges to zero as $n\to
\infty$ because $v(n)$ converges to $(0,1,0)$ and the curves
$C(\G_n), C(\G_{n+1})$ converge to curves on Delaunay surfaces whose
axes are perpendicular to $(0,1,0)$. Also note that this application
of the divergence theorem in~\cite{kks1} implies that for $\a\in
\a_3(n)$, $\int_{\a}\langle \eta_{\a},
v(n)\rangle-2H\int_{D(\a)}\langle v(n), N(n)\rangle=0.$ Thus,
equation \eqref{eq7} reduces to the equation:
\begin{equation}\label{eq8}\ve(n)+\sum_{\partial\in \a_4(n)}
\left[\int_{\partial}\langle \eta_\partial,
v(n)\rangle-2H\int_{\h{D}(\partial)}\langle v(n), N(n)
\rangle\right]=0.\end{equation}

On the other hand, for each $\partial\in \a_4(n)$
\begin{equation}\label{eq9} \int_{\partial}\langle \eta_{\partial}, v(n)\rangle - 2H
\int_{\h{D}(\partial)}\langle v(n), N(n)\rangle
=\int_\partial\langle \eta_{\partial}-\widehat{\eta}_{\partial},
v(n)\rangle \geq 0\end{equation} and the length of each
$\partial\in\a_4$ is uniformly bounded from below. Since $\ve(n)$ is
going to zero as $n$ goes to infinity, equations \eqref{eq8} and
\eqref{eq9} above imply that for $n$ large, the conormals
$\eta_{\partial}$ and $\h{\eta}_{\partial}$ are approaching each
other uniformly (see the lower left hand corner of Figure \ref{fig3}). Note that the intrinsic
distance of any point on the graphs $\h{D}(\partial)$ to $\partial$
is uniformly bounded (independent of $\partial$ and
$n$)\footnote{This uniform intrinsic distance estimate holds since
$CMC$ graphs are strongly stable (existence of a positive Jacobi
function) and there are no strongly stable, complete $CMC$ surfaces
in $\rth$; see Theorem 2 in~\cite{ror1} for a proof of this
result.}. The Harnack inequality, the above remark, the facts that
$\h{D}(\partial)$ is simply-connected and the second fundamental
form of $M$ is bounded, imply that there exists $\de>0$ such that if
$\int_{\partial}\langle \eta_{\partial} -\h{\eta}_{\partial},
v(n)\rangle <\de$, then there is a disk $\De(\partial)\subset M_1 -
M_1(n)$ which can be expressed as a small graph over
$\h{D}(\partial)$. The existence of $\De(\partial)$ contradicts that
$\partial\in \a_4(n)$, which means $\a_4(n)=\mbox{\O}$ for $n$
sufficiently large. This contradiction proves the assertion.
\end{proof}

 We now apply Assertion \ref{limit} to prove the following key
partial result in the proof of Theorem \ref{special}.

\begin{assertion}  \label{final} $M_1$ has at least one annular end.
\end{assertion}

\begin{proof} By Assertion \ref{limit}, for some fixed $n$ chosen
sufficiently large, every boundary curve $\a$ of $M(n)$ in the
collection $\a(n)-\a_2(n)$ bounds a compact domain $\De(\a)\subset
M_1-\partial M_1$. By the Alexandrov reflection principle and height
estimates for $CMC$ graphs, we find that the surface
$\h{M}(n)=M(n)\cup \bigcup_{\a\in\a(n)-\a_2(n)}\Delta(\a)$ must have
two almost circles in its boundary arising from $\a_2(n)$. Let
$\S(k)=\bigcup_{j\leq k}\widehat{M}(n+j)$. Note that by the
Alexandrov reflection argument and height estimates for $CMC$ graphs
with zero boundary values, there exist half-cylinders $C(n,k)$ in
$\rth$ which contain $\S(k)$ and have fixed radii $\frac4H$. Hence
there is a limit half-cylinder $C(n)\subset \rth$ that contains
$\S(\infty)=\bigcup_{k\in \Nsmall}\S(k)\subset M$. By the main result in~\cite{kks1}, $\S(\infty)$ is asymptotic to
a Delaunay surface, which proves the assertion.
\end{proof}

It follows from the discussion at the beginning of the proof of
Theorem \ref{special} and Assertion \ref{final} that if $M$ has at
least $n$ ends, $n>1$, then it has at least $n-1$ annular ends. It
remains to prove that if $M_1, M_2$ are given as in the beginning of
the proof of Theorem \ref{special} with $M_1$ having an annular end,
then $M_2$ has an annular end as well. To see this note that the
annular end $E_1\subset M_1$ is asymptotic to the end $F$ of a Delaunay surface
and so after a rotation of $M$, $M_1$ is a graph over a domain
$\De_1$ which contains the axis of $F$, which we can assume
to be the positive $x_1$-axis. Now translate $M_2$ in the direction
$(-1,0,0)$ sufficiently far so that its compact boundary has
negative $x_1$-coordinates less than $-\frac{2}{H}$; call the
translated surface $M'_2$ and let $\De'_2\subset P$ be the domain
over which $M_2'$ is a bigraph. If for some $n\in \N$ the line
$L_n=\{(n,t,0)\mid t\in \R\}$ is disjoint from $M_2'$, then $M_2'$
is contained in a halfplane of $P$ and our previous arguments imply
$M_2'$ has an annular end. Thus without loss of generality, we may
assume that every line $L_n$ intersects  $\partial\De'_2$ a first
time at some point $s_n$ with positive $x_2$-coordinate.

For $\theta\in (0,\frac{\pi}{2}]$, let $r(\theta)$ be the ray with
base point the origin and angle $\theta$ and let $W(\theta)$ be the
closed convex wedge in $P$ bounded by $r(\theta)$ and the positive
$x_1$-axis. Let $\theta_0$ be the infimum of the set of $\theta \in
(0,\frac{\pi}{2}]$ such that $W(\theta)\cap\{s_n\}_{n\in \Nsmall}$
is an infinite set. Because of our previous placement of $\partial
M'_2$, a simple application of the Alexandrov reflection principle
and height estimates for $CMC$ graphs with zero boundary values
implies that some further translate $M_2''$ of $M_2'$ in the
direction $(-1,0,0)$ must be disjoint from $r(\theta_0)$. Finally,
after a clockwise rotation $\h{M}_2$ of $M''_2$ by angle $\theta_0$,
our previous arguments prove the existence of an annular end of
$\h{M}_2$ of bounded distance from the positive $x_1$-axis. Thus, we
conclude that $M_2$ also has an annular end which completes the
proof of the first statement in Theorem \ref{special}.

We next prove the last statement of the theorem. Suppose $M\subset
\rth$ is a complete, properly embedded $CMC$ surface with bounded
second fundamental form and with the $(x_1,x_2)$-plane $P$ as a
plane of Alexandrov symmetry. Suppose $M$ contains two noncompact
components $M_1, M_2$ and we will prove that each of these surfaces
is a Delaunay surface.

Consider $M_1$ and $M_2$ to be two disjoint end representatives of
$M$ defined as bigraphs over two disjoint connected domains
$\Delta_1, \Delta_2$ in $P$, respectively.
\begin{assertion}
After possibly reindexing $\Delta_1$, $\Delta_2$ and applying a
rigid of $\rth$ preserving the plane $P$, then $\Delta_1\subset \{x_2\geq
0\}$
\end{assertion}
\begin{proof}
By what we have proved so far, we know that $M_2$ has an annular end
$E$ which is asymptotic to the end $D(E)$ of a Delaunay surface. Let
$r_E\subset \Delta_2$ be a ray contained in the axis of $D(E)$.
After a rigid motion of $M$ preserving $P$ assume $r_E$ is a ray
based at the origin of $P$. The arguments used to prove the first
statement of the theorem show that there are two disjoint annular
ends $F,\,G$ of $M_1$ such that for $R$ large the arc
$\alpha(F,G,R)$ in $\esf^1(R)-\Delta_1$ which intersects $r_E$, has
one of its endpoints in $\Delta_1\cap F$ and its other endpoint in
$\Delta_1\cap G$. Let $D(F),\, D(G)$ be ends of Delaunay surfaces to
which $F,\, G$ are asymptotic. Let $r_F,\, r_G \subset \Delta_1$ be
rays contained in the axes of $D(F),\, D(G)$ respectively. Let
$\gamma_1$ be a properly embedded arc in $\Delta_1$ consisting of
$r_F,\, r_G$ and a compact arc joining their endpoints. Let
$\gamma_2'$ be the proper arc in $P- \B(R)$ consisting of
$\alpha(F,G,R)$, a boundary arc in $(E\cap \partial \Delta_1)
-\B(R)$ and a boundary arc in $(G\cap \partial \Delta_1)-\B(R)$.
After a small perturbation of $\gamma_2'$ we obtain a proper arc
$\gamma_2$ contained in $P-\Delta_1$ which intersects $r_E$. Note
that $\gamma_1$ is contained in a halfplane and since $\gamma_2$
lies at a bounded distance from $\gamma_1$, the halfplane can be
chosen to contain both $\gamma_1$ and $\gamma_2$. After a rigid
motion, we may assume that this halfplane is $\{x_2\geq 0\}$. Since
the region bounded by $\gamma_1$ and $\gamma_2$ is a strip by
construction, by elementary separation arguments, either $\gamma_1$
lies between $\{x_2=0\}$ and $\gamma_2$ or $\g_2$ lies between $\{x_2=0\}$ and $\g_1$. If $\gamma_1$
lies between $\{x_2=0\}$ and $\gamma_2$, then $\Delta_2$ lies in the
halfspace, otherwise $\Delta_1$ does. After possibly reindexing,
this completes the proof.
\end{proof}

In the discussion which follows, we refer the reader to
Figure~\ref{fig4}. By the previous assertion, we may assume
$\Delta_1\subset \{x_2\geq 0\}$. Previous arguments imply that after
a rigid motion of $M$, we can further assume that $M_1$ contains as
annular end $E^+$ with the property that for $n\in \N$ sufficiently
large, the line segments $\{(n,t,0)\mid t>0\}$ intersect $\Delta_1$
for a first time in a point $p_n\in E_1$. Furthermore, $E^+$ is
asymptotic to the end $D^+$ of a Delaunay surface. Also we can
assume that the half axis of revolution of $D^+$ lies in $P$ and is
a bounded distance from the positive $x_1$-axis.

By the Alexandrov reflection principle and height estimates for
$CMC$ graphs with zero boundary values, $\De_1$ must not be
contained in a convex wedge of $P$ with angle less than $\pi$.
Therefore, for $n\in \N$ sufficiently large the line segments
$\{(-n,t,0)\mid t>0\}$ intersect a second annular end $\De_1$ in
points $p_{-n}\in E^-$ for a first time.  In this case the annular
end $E^-$ is asymptotic to the end $D^-$ of another Delaunay surface
and the half axis of $D^-$ in $P$ is a bounded distance from the
negative $x_1$-axis.

%%%%%%%%%%%%%%%%%%%%%%%%%%%%
\begin{figure}[h!]\begin{center}
\includegraphics[width=5.6in]{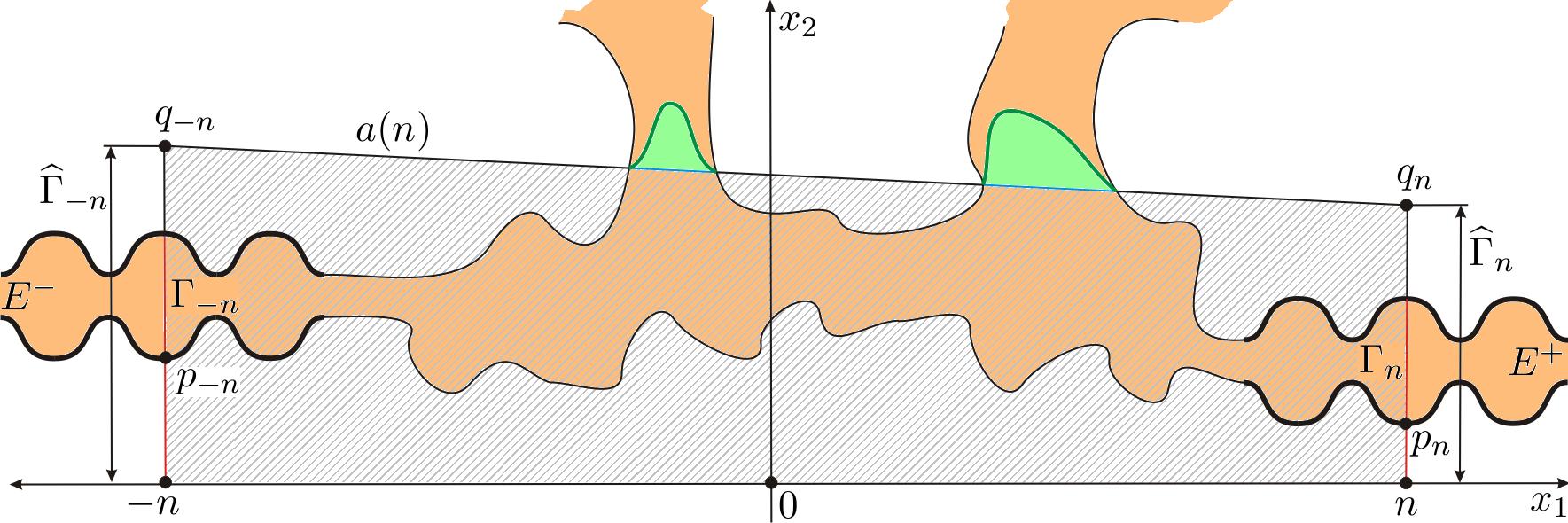}\end{center}
\caption{A picture of $M_1$ with two bubbles blown on its mean
convex side.}\label{fig4}
\end{figure}
%%%%%%%%%%%%%%%%%%%%%%%%%%%%

Similar to our previous arguments, we define for each $n\in \Z$ with
$|n|$ sufficiently large, curves $\g_n, \G_n, \widehat{\G}_n$ and
points $q_n$ as we did before (see Figures \ref{fig2} and
\ref{fig4}). For each $n\in \N$ sufficiently large, we define the
line segment $a(n)\subset P$ whose end points are the points
$q_{-n},\, q_n$. Now define for any sufficiently large $n$, the
compact region $T(n)\subset P$ bounded by the line segments
$\widehat{\G}_{-n}$, $\widehat{\G}_n$, $a(n)$ and the line segment
joining $(-n, 0,0)$ to $(n,0,0)$ and let $T(n)\times \R\subset \rth$
be the related convex domain in $\rth$. Let $M_1(n)$ be the
component of $M_1\cap (T(n)\times \R)$ which contains the point
$p_{-n}$. Note that $M_1(n)$ is compact with boundary consisting of
an almost circle $C(\G_{-n})$ which is a bigraph over an arc on
$\G_{-n}$, and an almost circle $C(\G_n)$ which is a bigraph over an
arc on $\G_n$ and a collection of bigraph components over a
collection of intervals $I_n$ in the line segment $a(n)$.

As in previous arguments, an assertion similar to Assertion
\ref{limit} holds in the new setting. With this slightly modified
assertion, one finds that the almost circles $C(\G_{-n})$ and
$C(\G_n)$ bound a compact domain $\widehat{M}_1(n)\subset M_1$. A
slight modification of the proof of Assertion \ref{final} implies
$M_1$ is cylindrically bounded and so, by the main theorem in~\cite{kks1},
$M_1$ is a Delaunay surface. Note that the axis of $M_1$ is an
infinite line in $\De_1$ and so $\De_2$ also lies in a halfplane of
$P$. The arguments above prove that $M_2$ is also a Delaunay
surface, which completes the proof of the theorem.
\end{proof}

\begin{remark}{\rm \label{r4.7} Using techniques in~\cite{kap1,map},
for every integer $n>1$, it is
possible to construct a surface $M_n$ with empty boundary and $n$
ends, none of which are annular, which satisfies the hypotheses of
the surface $M$ in the statement of Theorem~\ref{special} except for
the bounded second fundamental form hypothesis. Hence, the
hypothesis in the theorem  that $M$ has bounded second fundamental
form is a necessary one in order for the conclusion of the theorem
to hold. }
\end{remark}

\center{William H. Meeks, III at bill@math.umass.edu}\\
Mathematics Department, University of Massachusetts, Amherst, MA
01003
\center{Giuseppe Tinaglia gtinagli@nd.edu   \\
Mathematics Department, University of Notre Dame, Notre Dame, IN,
46556-4618}

\nocite{kk2}
\bibliographystyle{plain}
\bibliography{bill}

\end{document}